\newcommand{\vb}{\vspace{3mm}}
\renewcommand{\bar}{\overline}
\newtheorem{theorem}{Theorem}
\newtheorem{proposition}{Proposition}
\newtheorem{lemma}{Lemma}
\theoremstyle{definition}
\newtheorem{assumption}{Assumption}
\newtheorem{remark}{Remark}
\newtheorem{example}{Example}
\begin{document}

\title{Asymptotics and approximations of ruin probabilities\\ for multivariate risk processes in a Markovian environment}

\author[1,2]{G.A. Delsing
\footnote{Corresponding author. E-mail address: G.A.Delsing@uva.nl}}
\author[1,3]{M.R.H. Mandjes}
\author[1,4]{P.J.C. Spreij}
\author[1,2]{E.M.M. Winands}

\affil[1]{Korteweg-de Vries Institute 
University of Amsterdam, Science Park 107, 1098 XH Amsterdam, the Netherlands}

\affil[2]{Rabobank, Croeselaan 18, 3521 CB Utrecht, the Netherlands}

\affil[3]{CWI, Science Park 123, 1098 XG Amsterdam, the Netherlands}

\affil[4]{Radboud University, Heyendaalseweg 135, 6525 AJ Nijmegen, the Netherlands}
\maketitle
\begin{abstract} \noindent This paper develops asymptotics and approximations for ruin probabilities in a multivariate risk setting. We consider a model in which the individual reserve processes are driven by a common Markovian environmental process. We subsequently consider a regime in which the claim arrival intensity and transition rates of the environmental process are jointly sped up, and one in which there is (with overwhelming probability) maximally one transition of the environmental process in the time interval considered. The approximations are extensively tested in a series of numerical experiments.  

\vb

\noindent
\begin{keyword}
\noindent ruin probability; insurance risk; Markov processes; approximations; multi-dimensional risk process
\end{keyword}
\end{abstract}

\section{Introduction}
{Ruin theory} is the branch of applied probability that quantifies a firm's vulnerability to insolvency and ruin. So as to control risks, regulating authorities impose restrictions on the capital reserve that should be minimally kept. For instance, insurance companies should manage their capital reserve level such that the probability of ruin within one year is below a given threshold, e.g.\ 0.01\%. The main interest in ruin theory, and the objective of this paper, is to develop quantitative techniques to assess ruin probabilities. Even though ruin theory originates in actuarial science, and is common in the insurance industry, it also has applications in operational risk modelling \citep*[see][]{Kaishev}, credit risk modelling \citep*[see][]{Ruin_credit}, and related fields. 
This paper's distinguishing feature is that we succeed in analyzing a multivariate ruin problem, where the individual components are driven by a {\it common} environmental process.

\vb

In ruin theory, technical ruin of a firm occurs when the surplus (i.e., the capital reserve level) of the firm drops below 0. The evolution of the surplus of the firm over time experiences fluctuations due to amounts claimed and premiums earned. It depends on various characteristics including 
the distribution of the claim amounts, the inter-arrival time of claims, and the incoming premiums. In this paper we study a multi-dimensional variant of the classical Cram\'{e}r-Lundberg model \citep*[see][]{lundberg1903approximerad}.
Initially, the focus of ruin theory was on the probability $\phi(u)$ of \textit{ultimate ruin}, i.e.\ the probability that the surplus {\it ever} drops below zero given the initial reserve $u$. Later these results have been extended in many ways, most notably (i)~ruin in finite time, (ii)~other claim arrival processes than the Poisson process, and (iii)~asymptotics of $\phi(u)$ for $u$ large. We refer to e.g.\ \citet*{MR2766220} for a detailed account.  

\vb

While most of the existing literature primarily considers the univariate setting describing a single reserve process, in practice firms often have multiple lines of 
business. As a consequence, it is relevant to consider the probability of the surplus of one or more of the business lines dropping below zero. In a multivariate risk model the individual capital surplus processes are typically affected by common environmental factors (think of the impact of the weather on health insurance and agriculture insurance). It is noted that such a multivariate risk model has obvious applications in related settings, such as credit risk.

The main contributions of the paper are the following. We work with a simple yet versatile
multivariate risk model, in which the components are made correlated (by making them
dependent on a common Markovian environmental process). 
The focus is on developing techniques to assess ruin probabilities in this multivariate setting. 
We distinguish between two regimes, corresponding to the speed at which the environmental process evolves. 
For these regimes we derive asymptotic results for the multivariate risk process, leading to closed-form approximations for the multivariate finite-time ruin probability. A thorough numerical study provides us insight into the model dynamics and the impact of its parameters. 

\vb

We proceed with a few more words on the related literature, and its relation to our work. 
Multivariate risk processes play a prominent role in various studies (see e.g.\ the overview \citet*[Ch. XIII.9]{MR2766220}), but capturing the corresponding joint ruin probability has proven challenging \citep*[see e.g.][]{MR2322127,MR2016771}.
Our work is inspired by earlier work by \citet*{Loisel2004_unpublished,Loisel2007b}, which also makes use of a Markovian environmental process. The main difference between Loisel's work and the present paper is that the former focuses on an iterative scheme, whereas this paper uses asymptotic results to develop approximations for the finite-time ruin probability. Our work includes an extensive numerical study providing practical and theoretical insights into the model and the suggested approximations.

Markov-modulated risk models have been intensively studied \citep*[see e.g.][]{Asmussen1989,Reinhard1984,Dickson}. The common basic idea is that the claim arrivals and claim amounts are influenced by an external environment process. For the univariate finite-time ruin probability in a Markovian environment no closed-form expression is available (except for very special cases), but various approximations and efficient simulation approaches have been developed; a comparison of the approximations was presented by \citet*{Asmussen1989}.
This paper complements these works in the sense that we now consider their multivariate counterpart. 

\vb

This paper is organized as follows. Section \ref{sec_model} presents the model and preliminaries. It defines the risk process for each individual business line, and presents the multivariate risk model by introducing the Markovian environmental process and the multivariate finite-time ruin probability.
 
In Sections \ref{sec_diff} and \ref{sec_single_switch}, respectively, a diffusion approximation and a \textit{single-switch} approximation of the multivariate risk process are presented. The diffusion approximation corresponds to a regime in which the claim arrival process and the environmental process are sped up (but in a coordinated manner); under this scaling weak convergence to an appropriate multivariate Brownian motion is established. For such multivariate Brownian motions there is a considerable amount of literature on first passage probabilities; in particular, the bivariate case even allows for an explicit calculation of the finite-time ruin probability.
Diffusion approximations tend to work well in e.g.\ scenarios with a relatively large amount of outgoing claims over a finite horizon. 

The single-switch approximation considers a regime in which, with overwhelming probability, the background process has either zero or one transition in the time interval considered. This approximation is particularly accurate in case the time horizon is relatively short in comparison with the speed of the environmental process. For example, regulatory requirements for the insurance and banking industry are based on a 1-year horizon while business cycles (corresponding with the environmental process) usually last multiple years (average of 8 years). 

Numerical examples are provided in Section \ref{sec_results}. By considering different parameter settings, the single-switch approximation turns out to perform well in case of low environmental transition rates while the diffusion approximation is a more suitable choice when both the arrival intensities as well as the transition rates are high.  
Section \ref{sec_conclusion} concludes this paper, and discusses areas for future research. A number of technical results are collected in the Appendix.

\section{A multivariate risk model}\label{sec_model}
In this section, the multivariate risk process is introduced for a firm with multiple business lines. Each business line has its own reserve process.
For each of the business lines, there is some initial capital reserve, which increases due to premiums (that come in at a fixed rate per unit time), and decreases due to claims. We focus on the probability that, given certain initial reserve levels, one or more of the reserve processes (corresponding to the business lines) drop below zero before a specified time $T>0$. We assume no impact of insolvency of one business line on the others, and each line of business is free of expenses, taxes, and commissions. 

We model the multivariate (of dimension $m\in{\mathbb N}$) risk setting by a multi-dimensional variant of the classical Cram\'{e}r-Lundberg model. For the sake of clarity, we first recall this `base model', which later in this section will be extended so as to include dependencies between its components. 

The dynamics of the $m$-dimensional Cram\'{e}r-Lundberg reserve process ${\boldsymbol X}:=(X_1,\ldots,X_m)$, with $X_i:=\{X_i(t):t\geq 0\}$, are as follows. 
\begin{itemize}
\item[$\circ$]
Let $u_i\ge 0$ be the initial reserve of component $i\in\{1,\ldots,m\}$. 
\item[$\circ$]
Component $i$ grows linearly due to premiums at a constant premium rate $r_i\geq 0$ per unit time. 
\item[$\circ$]
Let ${\boldsymbol A}=(A_1,...,A_m)$ be the $m$-dimensional claim arrival process, where we assume that the arrival process $A_i:=\{A_i(t):t\geq 0\}$ is a Poisson process with parameter $\lambda_i$; the arrival times are denoted by $\{\tau_{i,k}:k\in\mathbb{N}\}$.

\noindent
The claim sizes of component $i$, $\{Z_{i,k}:k\in\mathbb{N}\}$, form a sequence of i.i.d.\ random variables independent of ${\boldsymbol A}$, with finite mean $\mu_i$, finite variance $\sigma_i^2$ and distribution function $F_i(\cdot)$. 
\end{itemize}
Combining the above components, the capital surplus process $X_i(t)$ for component $i$ equals
 \begin{equation}\label{eq_ruin}
X_i(t):=u_i+r_i t - \sum_{k=1}^{A_i(t)}Z_{i,k}.
\end{equation}
In Section \ref{subsec_Markov} we introduce Markov-modulation: the claim arrival intensities $\lambda_i$ and claim size distribution functions $F_i(\cdot)$ are not fixed in time but depend on an underlying environmental Markov process. Importantly, this environmental process is the same for all $m$ components (and could for instance reflect the `state of the economy', or weather conditions), thus rendering the individual risk processes $X_i$ dependent. In Section \ref{subsec_ruin} we define the ruin probabilities that we focus on in this paper. 

\subsection{Environmental dependence}\label{subsec_Markov}
In this subsection we point out how we make the $m$ components $X_i$ dependent and allow for the individual claim processes to change over time by using a Markovian environmental process. 
This environmental state process, denoted by $J:=\{J(t):t\geq 0\}$, is a Markov process with finite support $S=\{1,...,I\}$. Let, as before, ${\boldsymbol A}=(A_1,...,A_m)$ be the joint claim arrival process. We let the arrival rate pertaining to $A_i$ be $\lambda_{i,j}$ when the environmental process $J$ is in state $j$. As before, the corresponding arrival times are denoted by $\{\tau_{i,k}:k\in\mathbb{N}\}$.

Again we let $\{Z_{i,k}:k\in\mathbb{N}\}$ be the $k$-th claim size of component $i$ (i.e., arriving at $\tau_{i,k}$). If at $\tau_{i,k}$ the environmental process $J$ is in state $j$, the claim size is sampled (independently of anything else) from a distribution that has distribution function $F_{i,j}(\cdot)$; in other words,
\[\mathbb{P}(Z_{i,k}\leq x\,|\,J(\tau_{i,k})=j)=F_{i,j}(x),\:\: \ x\ge 0.\]
We assume that for each $i\in\{1,...,m\}$ and $j\in S$, the distribution of the claim sizes  has finite variance; let $\mu_{i,j}$ and $\sigma_{i,j}^2$ be the corresponding mean and variance.

The joint reserve process is again given by (\ref{eq_ruin}), but now with the components' arrival and claim size processes affected by the common environmental process $J$, in the way described above. Observe that the processes $X_i$ are dependent, but {\it conditional on} the the path of the environmental state process they are not. 

The transition rate matrix governing $J$ is denoted by $Q=(q_{k,l})_{k,l\in A}$ with $q_k:=-q_{k,k}=\sum_{l\neq k}q_{k,l}$ and initial environmental state distribution denoted by $p=(p_1,...,p_I)$ where $p_j:=\mathbb{P}(J(0)=j)$. Assume $\pi:=(\pi_1,...,\pi_I)$ to be the (unique) stationary distribution of $J$, and $\Pi$ be a matrix with each row being the steady-state vector $\pi$. The so-called {\it fundamental matrix} $\Upsilon=(\Upsilon_{k,l})_{k,l=1,...,I}$ is given by $\Upsilon=\left(\Pi-Q\right)^{-1}-\Pi$.

We use the notation ${\boldsymbol r}:=(r_1,...,r_m)$ and ${\boldsymbol u}:=(u_1,...,u_m)$ to denote the $m$-dimensional vectors of parameters corresponding to the premium rates and initial reserve levels, respectively. 

\subsection{Multivariate ruin and survival probabilities}\label{subsec_ruin}
In this subsection we further detail the quantities we wish to analyse, namely  multivariate ruin probabilities.
For a univariate risk process, say that corresponding to component $i$, the probability of ruin before time $T$ (starting in environmental state $j$ at time 0) is given by
\begin{equation}\label{eq_psi_j}
\psi_i^j(u_i,T):=\mathbb{P}\left(\inf_{t\in[0,T]}X_i(t)<0\,\big|\,X_i(0)=u_i, J(0)=j\right).
\end{equation}
Sometimes we are also interested in the complementary probability, referred to as the {\it survival probability}:
\[\bar{\psi}^j_i(u_i,T):=1-\psi_i^j(u_i,T).\]
The univariate ruin probability in a Markov-modulated setting has been intensively studied. Thus far, no explicit solution has been found (except for special cases), but there are various in-depth accounts of  available approximations (such as diffusion approximations, Segerdahl approximations, and corrected diffusion approximations); for an overview we refer to e.g.\ 
\citet*{Asmussen1989}.

\vb

The multivariate ruin probability (of {\it all} $m$ components), assuming $J$ is in state $j$ at time zero, is defined by
\begin{equation}\label{eq_multi_ruin}
\psi^j({\boldsymbol u},T):=\mathbb{P}\left(\inf_{t\in[0,T]} {\boldsymbol X}(t)\prec 0\,\big| \,{\boldsymbol X}(0)={\boldsymbol u}, J(0)=j\right);
\end{equation}
here the inequality `$\prec$' and the infimum are understood to be taken in a component-wise manner. 
The corresponding survival probability is defined in the obvious way. In addition, other performance metrics, such as the probability of ruin of a subset of components, can be defined in a similar fashion.

 \section{Diffusion approximation}\label{sec_diff}
This section presents a diffusion limit of multivariate risk processes introduced in the previous section. This asymptotic result is then used to develop a diffusion approximation of the multivariate finite-time ruin probabilities. The derivation of the diffusion approximation of ruin (and survival) probabilities for the multivariate case follows the one-dimensional case \citep*{ASMUSSEN1987301,Asmussen1989} to the extent that the (in our case multivariate) centered Markov-modulated claims process is approximated by a (in our case multivariate) Brownian motion;
this approximation is based on the functional central limit theorem (FCLT) presented in Section \ref{subsec_FCLT}.
Our multivariate ruin probability $\psi^j({\boldsymbol u},T)$ can then be approximated by its Brownian counterpart, which can be expressed in closed form relying on results for first-passage probabilities for multivariate Brownian motion; see Sections  \ref{subsec_BMextrema}--\ref{subsec_diffusion}.

\subsection{Multivariate FCLT for Markov-modulated compound Poisson processes}\label{subsec_FCLT}
Define the Markov-modulated compound Poisson process ${\boldsymbol Y}:=(Y_1,\cdots,Y_m)$ 
by
\[Y_i(t) :=\sum_{k=1}^{A_i(t)} Z_{i,k},\]
where the processes $A_i$ and $Z_{i,k}$ are generated by the model with environmental dependence, as was introduced in Section \ref{sec_model}. For the case $m=1$,  \citet*{Pang} prove a FCLT under appropriate scalings of the claim arrival process, claim sizes and the underlying Markov environmental process. The main objective of this subsection is to extend this FCLT to the multivariate case. After having introduced some notation, we present the multivariate counterpart of  \citet*[Thm. 1.1]{Pang} in our Theorem \ref{Th_Pang_multi}.

\vb

We define $\mathbb{D}^m$ as the space of $m$-vectors of c\`{a}dl\`{a}g functions which is endowed with the metric \[d^m_B(x,y):=\max_{1\leq j\leq m}d_B(x_j,y_j),\] 
where $d_B$ denotes the Billingsley metric as in \citet*[Sec.\ 28.3]{davidson1994stochastic} and $d^m_B$ induces the product topology. In $\mathbb{D}$ the Skorokhod $J_1$ metric is equivalent to the Billingsley metric, and the separability of $(\mathbb{D},J_1)$ implies both separability of $(\mathbb{D}^m,d^m_B)$ and that $\mathcal{B}_D^m=\mathcal{B}_D\times\mathcal{B}_D\times\cdots\times\mathcal{B}_D$ is the Borel field of $(\mathbb{D}^m,d^m_B)$. The finite-dimensional sets of $\mathbb{D}^m$ is the field generated by the product of $m$ copies of the finite-dimensional sets of $\mathbb{D}$.

\vb

In our scaling, we scale both the claim sizes and the arrival rates. The scaling parameter is $n$, which we let grow large. 
\begin{itemize}
\item[$\circ$] The claim sizes $\{Z_{i,k}^n\}$ have means $\mu_{i}^n$ and variances $\sigma_{i}^n$. It assumed that these are such that, as $n\rightarrow\infty$, $\mu_{i}^n\rightarrow \mu_i$ and $\sigma_{i}^n\rightarrow \sigma_i$.
\item[$\circ$] Now consider the Markovian environmental process 
$J^n$. The corresponding transition rates are sped up by a factor $n^{\alpha}$, with $\alpha>0$, meaning that they become $q_{i,j}^n:=n^{\alpha}q_{i,j}$. The value of $\alpha$ indicates the speeding effect of the modulating environmental process. 
\item[$\circ$]
For component $i\in\{1,\cdots,m\}$ the arrival rates $\lambda_{i}:=(\lambda_{i,1},\cdots\lambda_{i,I})$ are sped up (essentially) linearly. More concretely, for $n\rightarrow \infty$, we assume 
\[\frac{\lambda_{i}^n}{n}\rightarrow \lambda_{i}.\] 
\end{itemize}
We let ${\boldsymbol Y}^n:=(Y_1^n,...,Y_m^n)$ denote a sequence of Markov-modulated compound Poisson processes corresponding to the parameters $\lambda_{i,j}^n$, $\mu_{i,j}^n$ and $\sigma_{i,j}^n$ and the transition rate matrix $Q^n:=(q_{i,j}^n)_{i,j\in S}$. 
Similar notation is used for the process ${\boldsymbol A}^n$.

\vb

Similar to the approach followed by \citet*{Pang}, we define the diffusion-scaled (i.e., centered and normalized) process 
${\hat{\boldsymbol Y}}^n=(\hat{Y}_1^n,...,\hat{Y}_m^n)$, with $\hat{Y}_i^n:=\{\hat{Y}_i^n(t):t\geq 0\}$ defined through
$$\hat{Y}_i^n(t):=\frac{1}{n^\delta}\left(Y_i^n(t)-\bigg(\sum_{j=1}^I\lambda_{i,j}^n\mu_{i,j}\pi_j\bigg) t\right), \ \text{for} \ \frac{1}{2}\leq\delta < 1 \ \text{and} \ t\geq 0.$$

Theorem \ref{Th_Pang_multi} below is the multivariate version of the FCLT for univariate Markov-modulated compound Poisson processes (as was presented by \citet*{Pang}). The full proof is given in Appendix \ref{appA}. 

Before stating the theorem, we first introduce some notation. Let
\begin{align*}\bar{\lambda}_i:=&\,\sum_{j=1}^I\lambda_{i,j}\mu_{i,j}\pi_j, \:\:\: \bar{m}^2_i:=\,\sum_{j=1}^I\lambda_{i,j}\mu_{i,j}^2\pi_j,\\
 \ \bar{\sigma}_i^2:=&\,\sum_{j=1}^I\lambda_{i,j}\sigma_{i,j}^2\pi_j, \:\:{\ \rm and}\:\:\bar{\beta}_{i,j}:=\,2\sum_{k=1}^I\sum_{l=1}^I\lambda_{i,k}\lambda_{j,l}m_{i,k}m_{j,l}\pi_{k}\Upsilon_{k,l}.\end{align*}
 In addition, let $\bar \Sigma_1 :={\rm diag}\{\bar{\sigma}_{1}^2,\ldots,\bar{\sigma}_{m}^2\}$, $\bar \Sigma_2 :={\rm diag}\{\bar{m}_{1}^2,\ldots,\bar{m}_{m}^2\}$, and $\bar\Sigma_3:=(\bar\beta_{i,j})_{i,j\in{S}}.$
\begin{theorem}\label{Th_Pang_multi}
Under the scaling considered, as $n\to\infty$,
$$\hat{\boldsymbol Y}^n\Rightarrow \bar{\boldsymbol Y} \ \text{in \ } (\mathbb {D}^m,d^m_B),$$
where $\bar{Y}$ is a $m$-dimensional Brownian motion with mean zero and covariance matrix 
$$\bar{\Sigma}:=\begin{cases} \bar{\Sigma}^1+\bar{\Sigma}^2, & \text{if \ } \delta=\tfrac{1}{2}, \ \alpha>1, \\
\bar{\Sigma}^1+\bar{\Sigma}^2+\bar{\Sigma}^3, & \text{if \ } \delta=\tfrac{1}{2}, \ \alpha=1, \\
\bar{\Sigma}^3, & \text{if \ } \delta=1-\tfrac{\alpha}{2}, \ \alpha\in(0,1).
\end{cases}$$ 
\end{theorem}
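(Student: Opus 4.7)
The plan is to mimic the univariate argument of \citet*{Pang}, extending it to $\mathbb{D}^m$ by exploiting the fact that the product topology induced by $d^m_B$ reduces joint tightness to coordinatewise tightness; the genuine new work lies in identifying the cross-component structure of the limiting covariance.

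First I would decompose each centered component into three pieces,
\[\hat{Y}_i^n(t) \,=\, M_i^{(1),n}(t) \,+\, M_i^{(2),n}(t) \,+\, F_i^{(3),n}(t),\]
where $M_i^{(1),n}$ is the $n^{-\delta}$-scaled compensated sum of jump-size deviations $Z_{i,k}^n - \mu_{i,J(\tau_{i,k})}^n$ (the contribution of conditional claim-size variance), $M_i^{(2),n}$ is the $n^{-\delta}$-scaled compensated point-process term $\int_0^t \mu_{i,J^n(s)}^n(dA_i^n(s) - \lambda_{i,J^n(s)}^n ds)$ (the contribution of Poisson-count variance), and
\[F_i^{(3),n}(t) \,:=\, \frac{1}{n^\delta}\int_0^t \bigl(\lambda_{i,J^n(s)}^n\mu_{i,J^n(s)}^n - \bar{\lambda}_i\bigr)\,ds\]
is the environmental-fluctuation term measuring deviation of the occupation times of $J^n$ from their stationary values.

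Next I would establish tightness: each marginal $\hat{Y}_i^n$ is tight in $(\mathbb{D},J_1)$ by the univariate theorem of \citet*{Pang}, and since $d^m_B$ induces the product topology this immediately gives tightness of $\hat{\boldsymbol Y}^n$ in $(\mathbb{D}^m,d^m_B)$. Convergence of finite-dimensional distributions would then follow from a multivariate martingale CLT applied jointly to $(M_i^{(1),n},M_i^{(2),n},F_i^{(3),n})_{i\leq m}$. The key observation for the covariance structure is that, \emph{conditional} on the path of $J^n$, the processes $M_i^{(1),n}$ and $M_i^{(2),n}$ are mutually independent across $i$ (since given $J$ the arrival processes $A_i$ and the claim-size sequences $\{Z_{i,k}\}$ are independent), so these two terms contribute only to the diagonal of the limiting covariance, yielding $\bar\Sigma^1$ and $\bar\Sigma^2$ respectively. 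By contrast, $F_i^{(3),n}$ is a deterministic functional of the single shared path $J^n$ for every $i$, so it genuinely couples the components; the asymptotic covariance between $F_i^{(3),n}$ and $F_j^{(3),n}$ is computed via the CLT for additive functionals of ergodic continuous-time Markov chains, producing the fundamental matrix $\Upsilon$ inside $\bar\beta_{i,j}$ with the prefactor $2$ coming from the usual symmetrisation of the Poisson equation.

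Finally I would do the orders-of-magnitude bookkeeping for the three regimes. Because the arrival rates scale linearly, the martingale terms $M_i^{(1),n}$ and $M_i^{(2),n}$ have quadratic variations of order $n\cdot t$, hence are of order $n^{1/2}$; they produce nontrivial limits when $\delta = 1/2$ and vanish when $\delta = 1 - \alpha/2$ with $\alpha < 1$. The environmental term $F_i^{(3),n}$, by the FCLT for $J^n$ whose jumps are sped up by $n^{\alpha}$, has typical size $n \cdot n^{-\alpha/2} = n^{1-\alpha/2}$ before normalization; consequently, with $\delta=1/2$ it vanishes when $\alpha>1$, is of the same order as the other terms when $\alpha=1$, and dominates when $\alpha<1$ (forcing the choice $\delta=1-\alpha/2$). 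Combining the surviving limits in each case reproduces exactly the three covariance matrices in the statement. The step I expect to be the main obstacle is the rigorous computation of the covariance of the environmental-fluctuation term in the regime $\alpha\in(0,1)$: one must carefully justify the multivariate CLT for $\int_0^t(\mathbf{1}_{\{J^n(s)=k\}} - \pi_k)\,ds$ under the $n^\alpha$-time change and then evaluate the asymptotic covariance in closed form via the Poisson equation $(\Pi - Q)\Upsilon = I - \Pi$, so that the constants $\bar\beta_{i,j}$, with their precise $\pi_k$-weighting and factor of two, emerge as stated.
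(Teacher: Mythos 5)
Your proposal is correct and follows essentially the same route as the paper: the identical three-way decomposition into claim-size-variance, arrival-count-variance, and occupation-time terms, marginal-plus-product-topology tightness, a joint martingale CLT to combine the pieces, the conditional-independence argument for why only the occupation-time term couples components (giving the fundamental-matrix covariance $\bar\Sigma^3$), and the same $n^{1/2}$ versus $n^{1-\alpha/2}$ bookkeeping to sort out the three regimes. The paper differs only in minor organisation (it proves marginal convergence of each piece separately, via characteristic functions for $\hat{\boldsymbol U}^n$, Whitt's martingale FCLT for $\hat{\boldsymbol V}^n$, and a cited result of Blom et al.\ for $\hat{\boldsymbol W}^n$, before invoking the joint martingale theorem of Jacod--Shiryaev), and it likewise outsources the occupation-time CLT you flag as the main obstacle.
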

Theorem \ref{Th_Pang_multi} shows that a centered and appropriately scaled version of the multivariate reserve process converges to a multivariate Brownian motion.

\subsection{First passage probabilities of multivariate Brownian motion}\label{subsec_BMextrema}
This subsection reviews results on first-passage probabilities corresponding to multivariate Brownian motions (or, equivalently, the joint distribution of the extrema of multivariate Brownian motions). We describe some of the existing results which we appeal to in the next subsection to derive a diffusion approximation of the multivariate ruin probability $\psi^j({\boldsymbol u},T)$. 

In general, the problem of identifying the
joint distribution of extrema of correlated Brownian motions is highly challenging; solutions are typically of an implicit nature.
\begin{itemize}
\item[$\circ$]
In a pioneering paper, \citet*{Iyengar} considered an analytical expression for the first passage time of a bivariate Brownian motion with zero-drift. 
\item[$\circ$]
For the case of non-zero drift, as is the case in our situation, a double integral expression for the joint cumulative distribution of the suprema of two correlated Brownian motions was later found by \citet*[Lemma 3]{he1995double}. 
\item[$\circ$]
A summary of existing results on the first-passage problem of two-dimensional Brownian motions was recently published by \citet*{Kou2016}.
\item[$\circ$]
For dimensions larger than 2, explicit formula for the joint distribution of extrema of Brownian motions exist only for a specific set of correlations; see e.g.\ \citet*{Escobar}. A multivariate approximation of the joint survival probability as a linear combination of the bivariate survival probabilities has been developed by \citet*[Eqn. (30)]{Bhansali}. 
\end{itemize}

\subsection{Diffusion approximation of the multivariate ruin probability in finite time}\label{subsec_diffusion}
This subsection combines the asymptotic result of Subsection \ref{subsec_FCLT} and the literature study presented in Subsection \ref{subsec_BMextrema} to derive a diffusion approximation of the multivariate finite-time ruin probability.

\vb 

We consider the following specific scaling (obeying the assumptions imposed earlier).  For ease we let the claim sizes $\{Z_{i,k}^n\}$ be independent of $n$. In addition, we assume the arrival rates to be linear in $n$: $\lambda_i^n=n\lambda_i$; define $\bar{\lambda}_i^n$ as $n\bar\lambda_i$. Finally, we consider the setting 
$\delta=\frac{1}{2}$ and $\alpha=1$ (which essentially covers the two other cases), so that $q_{i,j}^n:=n q_{i,j}$.

Let $\bar{{\boldsymbol Y}}$ denote a $m$-dimensional Brownian motion with covariance matrix $\bar{\Sigma}:=\bar{\Sigma}^1+\bar{\Sigma}^2+\bar{\Sigma}^3$, as defined before. 
Scale ${\boldsymbol u}^n = \sqrt{n}\,{\boldsymbol u}$ for a componentwise positive vector ${\boldsymbol u}$ (not depending on $n$), and ${\boldsymbol r}^n = \bar{\boldsymbol \lambda}^n + \sqrt{n}\,{\boldsymbol \varrho}$ for a vector ${\boldsymbol \varrho}$ (not depending on $n$). 
Consider the scaled risk process ${\boldsymbol X}^n$, defined by
\[{\boldsymbol X}^n (t)= {\boldsymbol u}^n +{\boldsymbol r}^n - {\boldsymbol Y}^n(t),\] and the corresponding ruin probability. Then, for any $n$, 
\begin{align*}
\mathbb{P}\left(\inf_{t\in[0,T]}{\boldsymbol X}^n(t)\prec{\boldsymbol 0}\,|\,{\boldsymbol X}^n(0)={\boldsymbol u}^n\right)&=\mathbb{P}\left(\sup_{t\in[0,T]}{\boldsymbol Y}^n(t)-{\boldsymbol r}^nt\succ {\boldsymbol u}^n\right)\\
&=\mathbb{P}\left(\sup_{t\in[0,T]}\frac{{\boldsymbol Y}^n(t)-\bar{{\bm \lambda}}^nt}{\sqrt{n}}-{\boldsymbol \varrho}\,t\succ {{\boldsymbol u}}\right),
\end{align*}
which, by Theorem \ref{Th_Pang_multi} in combination with the continuous mapping theorem, as $n\to\infty$ converges to
\[\mathbb{P}\left(\sup_{t\in[0,T]}\bar{\boldsymbol Y}(t)-{\boldsymbol \varrho}\,t\succ {{\boldsymbol u}}\right).\]

For practical purposes the last probability can be evaluated using the results found in the literature as summarized in the previous subsection. For the bivariate case, the numerical work performed in Section \ref{sec_results} makes use of \citet*[Lemma 3]{he1995double}.
Similar to the univariate case, the multivariate diffusion approximation gains accuracy when increasing the claim arrival intensities $\lambda$ and the transition rates $Q$. To explore the approximation's accuracy,  we perform a numerical study in Section \ref{sec_results}.

\begin{remark}Observing that our scaling of the $\lambda_i$ and $q_{i,j}$ effectively means that we scale time by a factor $n$, we remark that the above scaling is equivalent to the {\it heavy-traffic regime} in queueing theory; cf.\ \cite[Section 7c]{MR2766220}. To see this, recall that in such a heavy-traffic scaling, the drift of the queueing process is some $-\varepsilon$, whereas time is blown up by a factor $1/\varepsilon^2$; under this scaling the queue's workload process multiplied by $\epsilon$ weakly converges to reflected Brownian motion as $\varepsilon\downarrow 0.$ 
\end{remark}

\section{Single-switch approximation}\label{sec_single_switch}
This section presents a single-switch approximation of a multivariate risk process and derives the corresponding approximation of the finite-time multivariate ruin probability.
The single-switch approximation corresponds to a regime in which, with high probability, the environmental process has either zero or one transition in the time interval considered.  As a result, the approximation is expected to work well in case the environmental process is slow relative to the claim arrival process.

In the single-switch approximation of the multivariate ruin probability a crucial role is played by the time $\tau$, being the time of the first switch of the environmental state. Given the initial state of the environmental process is $j$, for each new state $k\in S\backslash\{j\}$ we define the corresponding single-switch multivariate ruin probability by
\begin{equation}\label{eq_app_switch}
\psi^{j,k}({\boldsymbol u},\tau ,T):=\mathbb{P}\left(\inf_{t\in[0,T]} {\boldsymbol X}(t)\prec 0\,\Big|\, {\boldsymbol X}(0)={\boldsymbol u},  \{J(t)\}_{0\leq t\leq \tau}=j,\{J(t)\}_{\tau< t\leq T}=k\right);
\end{equation}
the no-switch multivariate ruin probability is
\begin{equation}
\psi^{j}({\boldsymbol u},T):=\mathbb{P}\left(\inf_{t\in[0,T]} {\boldsymbol X}(t)\prec 0\,\Big|\, {\boldsymbol X}(0)={\boldsymbol u},  \{J(t)\}_{0\leq t\leq \tau}=j\right).
\end{equation}
 \normalsize
In line with notation used earlier, $\psi^{j,k}_i(u_i,\tau,T)$ denotes the single-switch ruin probability pertaining to component $i$,
and $\psi^{j}_i(u_i,\tau,T)$ is its no-switch counterpart. When $\tau> T$, the transitioned environmental state $k$ can be omitted as it does not influence the probability. Conditional on the time of the single switch of the environmental state, the risk processes of the individual business lines are independent.

\vb

As a result of the Markov property of the environmental state process, the environmental state switches from starting state $j$ to $k$ after an exponentially distributed time with intensity $q_{j,k}$. Given the initial environmental state is $j$, the probability of at most 1 switch of the environmental state over time horizon $T$ is thus given by
\begin{equation}\label{eq_total_prob}
P^j(T):=\sum_{k\in S\setminus\{j\}}\int_{0}^T q_{j,k}e^{-q_{j}\tau}e^{-q_{k}(T-\tau)}{\rm d}\tau+e^{-q_j T}=\sum_{k\in S\setminus\{j\}}\frac{q_{j,k}}{q_{j}-q_{k}}\left(e^{-q_{k}T}-e^{-q_{j} T}\right)+e^{-q_j T}.
\end{equation} 
The above gives rise to the following single-switch approximation of the multivariate ruin probability which, for initial environmental state $j$:
$$\chi^{j}({\boldsymbol u},T):=\frac{1}{P^j(T)}\left(\sum_{k\in S\setminus\{j\}}\int_{0}^T\prod_{i=1}^m\psi_i^{j,k}({\boldsymbol u},\tau,T)q_{j,k}e^{-q_{j}\tau}e^{-q_{k}(T-\tau)}{\rm d}\tau+\psi_i^{j}({\boldsymbol u},T)e^{-q_j T}\right).$$ 
The difficulty in computing the approximation lies in determining the single-switch univariate ruin probabilities $\psi_i^{j,k}(u_i,\tau,T)$ and their no-switch counterparts $\psi_i^{j}({\boldsymbol u},T)$.

\subsection{Single-switch univariate ruin probabilities: a BM scaling approximation}\label{subsec_BMscaling}
In this section we propose an easy-to-implement approximation of the single-switch univariate ruin probability $\psi_i^{j,k}({\boldsymbol u},\tau,T)$ and its no-switch counterpart $\psi_i^{j}({\boldsymbol u},T)$. We do so
by conditioning on the capital surplus level at the time of the switch $\tau$. More specifically, for business line $i$, starting in environmental state $j$ and transitioning to state $k$ at time $\tau$, 
\begin{align}\label{eq_def_switch}
\psi_i^{j,k}({u}_i,\tau,T):&=\mathbb{P}\left(\inf_{t\in[0,T]} X_i(t)<0\,\Big| \,X_i(0)=u_i,  \{J(t)\}_{0\leq t\leq \tau}=j,\{J(t)\}_{\tau<t\leq T}=k\right)\nonumber\\
&=\psi_i^{j}(u_i,\tau)+\bar{\psi}_i^{j}(u_i,\tau)\int_0^\infty\psi_i^{k}(v,T-\tau)\,{\rm d}G_{i,u_i}^{j,\tau}(v),
\end{align}
where $G_{i,u_i}^{j,\tau}(\cdot)$ denotes the probability distribution function of $X_i(\tau)$ with $X_i(0)=u_i$ conditional on no ruin having occurred up to time $\tau$, i.e.,
$$G_{i,u_i}^{j,\tau}(v):=\mathbb{P}\left(X_i(\tau)\leq v\,\Big| \,X_i(0)=u_i,  \{J(t)\}_{0\leq t\leq \tau}=j,\inf_{t\in[0,\tau]}X_i(t)>0\right).$$
Without conditioning on no ruin occurring prior to time $\tau$ (i.e., the requirement that $X_i(t)>0$ be positive for all $t\in[0,\tau]$), the above probability distribution of the capital surplus could have been computed relying on calculations involving compound Poisson processes; the fact that we have to incorporate this condition complicates the analysis considerably.
We propose the following workaround. 
\begin{itemize}
\item[$\circ$]
We introduce a Brownian-motion based scaling factor, which helps us approximate the conditional probability distribution $G_{i,u_i}^{j,\tau}(\cdot)$.
To this end, we first define
\begin{equation}
\Xi^{j,\tau}_{i,u_i}(v):=\frac{\mathbb{P}\left(X_i(\tau)\leq  v \,\Big|\, X_i(0)=u_i, \ \{J(s)\}_{0\leq s\leq \tau}=j, \ \inf_{s\in [0,\tau]} X_i(s)>0 \right)}{\mathbb{P}\left(X_i(\tau)\leq v\, \Big|\, X_i(0)=u_i, \ \{J(s)\}_{0\leq s\leq \tau}=j\right)}.
\end{equation}
We thus find that
$$G_{i,u_i}^{j,\tau}(v)=\Xi^{j,\tau}_{i,u_i}(v)
\times \mathbb{P}\left(X_i(\tau)\leq v\, \Big|\, X_i(0)=u_i, \ \{J(s)\}_{0\leq s\leq \tau}=j\right).$$
Similarly we define the density of the conditional probability distribution $g_{i,u_i}^{j,\tau}(v)$ (assumed to exist) and the corresponding scaling factor $\xi^{j,\tau}_{i,u_i}(v)$.
\item[$\circ$]
The next step is to approximate the univariate risk process $X_i(\cdot)$ by a Brownian motion $B_{i,j}(\cdot)$; the drift and variance coefficient are chosen so as to match the first two moments. 
Conditional on $\{J(s)\}_{0\leq s\leq \tau}=j$, the process $X_i(\cdot)$ over the time interval $[0,\tau]$ can be approximated by a Brownian motion with drift $r_i-\lambda_{i,j}\mu_{i,j}$ and variance $\lambda_{i,j}(\sigma_{i,j}^2+\mu_{i,j}^2)$, applying the results from e.g.\ \citet*{Grandell1977}. 
\item[$\circ$]
The scaling factor $\Xi^{j,\tau}_{i,u_i}(v)$ can then be approximated by its Brownian counterpart 
\[\hat\Xi^{j,\tau}_{i,u_i}(v):=\frac{\mathbb{P}\left(B_{i,j}(\tau)\leq  v \,\Big|\, B_{i,j}(0)=u_i, \inf_{s\in [0,\tau]} B_{i,j}(s)>0 \right)}{\mathbb{P}\left(B_{i,j}(\tau)\leq v\, \Big|\, B_{i,j}(0)=u_i\right)}.\]
Given that we know, conditional on its values at times $0$ and $\tau$, the distribution of the maximum of a Brownian motion in the interval $[0,\tau]$ (relying on explicit Brownian-bridge calculations), we can explicitly evaluate this expression. Along the same lines, an approximation of $\xi^{j,\tau}_{i,u_i}(v)$) can be found; this can be done using results for first-passage probabilities for Brownian motion as can be found in \citet*[Cor. 8.1 \& Cor. 8.2]{Joshi2003}. We thus find\begin{equation}
\hat{\xi}^{j,\tau}_{i,u_i}(v)=\frac{\displaystyle 1-\exp\bigg({-\frac{4u_i(u_i+v)}{2\lambda_{i,j}(\sigma_{i,j}^2+\mu_{i,j}^2)\tau}}\bigg)}{\displaystyle N\left(\frac{(r_i-\lambda_{i,j}\mu_{i,j})\tau+u_i}{\sqrt{\lambda_{i,j}(\sigma_{i,j}^2+\mu_{i,j}^2) \tau}}\right)-\exp\bigg({\frac{-2(r_i-\lambda_{i,j}\mu_{i,j}) u_i}{\lambda_{i,j}(\sigma_{i,j}^2+\mu_{i,j}^2)}}\bigg)N\left(\frac{-u_i+(r_i-\lambda_{i,j}\mu_{i,j}) \tau}{\sqrt{\lambda_{i,j}(\sigma_{i,j}^2+\mu_{i,j}^2) \tau}}\right)},
\label{eq_SF}
\end{equation}
where $N(\cdot)$ represents the cumulative normal distribution function.
\end{itemize}
Putting all above components together, we can now approximate $\chi^j({\boldsymbol u},T).$
In Example \ref{ex_base} in Section \ref{sec_results} the (numerical) performance of the resulting single-switch approximation is investigated in detail.
\subsection{Special case: Exponential claims with environment-independent claim sizes}\label{subsec_special}
This section considers a specific instance of the risk model introduced in Section \ref{sec_model}: exponentially distributed claims without environmental influence on the claim sizes. In other words, the claim size distribution is independent of the environmental state $j\in S$ and given by $F_{i,j}(x)=1-\exp\{-\frac{x}{\mu_i}\}$, $x\in\mathbb{R}$.
For this special instance of the model, the single-switch ruin probability of component $i$ starting in environmental state $j$ and switching to state $k$ after time $\tau$, $\psi_i^{j,k}(u_i,\tau,T)$, can be calculated explicitly. 
\begin{proposition}\label{prop_exp} For component $i\in\{1,\ldots,m\}$, environmental states $j,k\in S$ and transition time $\tau\in(0,T]$,
$$\psi_i^{j,k}(u_i,\tau,T)=\begin{cases}
 {\displaystyle \frac{\lambda^*_{i}\mu_i}{r_i}} \exp\Bigg\{-\left({\displaystyle \frac{1}{\mu_i}}- {\displaystyle\frac{\lambda^*_{i}}{r_i}}\right)u_i\Bigg\}-{\displaystyle \frac{1}{\pi}}{\displaystyle \int_0^\pi} {\displaystyle \frac{f_1(\theta)f_2(\theta)}{f_3(\theta)}}{\rm d}\theta, & \text{for } r_i>\lambda_{i}\mu_i\\
  &\vspace{-3mm}\\
  1-{\displaystyle \frac{1}{\pi}}{\displaystyle \int_0^\pi} {\displaystyle \frac{f_1(\theta)f_2(\theta)}{f_3(\theta)}}{\rm d}\theta, &\text{for } r_i\leq\lambda^*_{i}\mu_i
\end{cases}$$
\normalsize
where
\begin{align*}
f_1(\theta)&=\frac{\lambda^*_{i}\mu_i}{r_i}\exp\Bigg\{\frac{2 T\sqrt{r_i\lambda^*_{i}}}{\sqrt{\mu_i}}\cos\theta-\left(r_i/\mu_i+\lambda^*_{i}\right)T+\frac{u_i}{\mu_i}\left(\frac{\sqrt{\lambda^*_{i}\mu_i}}{\sqrt{r_i}}\cos\theta-1\right)\Bigg\}\\
f_2(\theta)&=\cos\left(\frac{u_i\sqrt{\lambda^*_{i}}}{\sqrt{r_i\mu_i}}\sin\theta\right)-\cos\left(\frac{u_i\sqrt{\lambda^*_{i}}}{\sqrt{r_i\mu_i}}\sin\theta+2\theta\right)\\
f_3(\theta)&=1+\frac{\lambda^*_{i}\mu_i}{r_i}-2\frac{\sqrt{\lambda^*_{i}\mu_i}}{\sqrt{r_i}}\cos\theta\\
\lambda^*_i&=\frac{\lambda_{i,j}\tau+\lambda_{i,k}(T-\tau)}{T}
\end{align*}
\end{proposition}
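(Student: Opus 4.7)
The plan is to recognise the right-hand side as the classical Tak\'acs--Prabhu closed-form expression for the finite-time ruin probability of the homogeneous Cram\'er--Lundberg model with exponential claims, evaluated at arrival rate $\lambda^*_i$, premium rate $r_i$, mean claim size $\mu_i$, and horizon $T$. The work therefore splits into (a)~recalling that baseline formula, and (b)~justifying that for state-independent exponential claims the piecewise-constant-intensity setting produces the same ruin probability as the homogeneous one with the time-averaged rate $\lambda^*_i$.

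For (a), I would cite the contour-integral expression obtained by Laplace inversion of the $M/M/\cdot$ workload (cf.\ Prabhu's monograph): for a homogeneous compound Poisson process with arrival rate $\lambda$, exponential claims of mean $\mu$, premium rate $r$, and initial reserve $u$, the finite-time ruin probability has exactly the form stated in the proposition with $\lambda^*_i$ replaced by $\lambda$. The case split on $r$ versus $\lambda\mu$ is inherited from that baseline result: the leading term $(\lambda\mu/r)\exp\{-(1/\mu-\lambda/r)u\}$ reaches one at the edge, and in the supercritical regime the classical derivation collapses that first term to $1$.

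For (b), I would exploit two ingredients: the claim sizes are i.i.d.\ exponential and do not depend on $J$, so only the Poisson intensity changes at time $\tau$; and the total number of claims on $[0,T]$ is Poisson with mean $\lambda^*_i T = \lambda_{i,j}\tau+\lambda_{i,k}(T-\tau)$ in both the piecewise and the homogeneous settings. I would then write the survival probability at time $\tau$ under the state-$j$ homogeneous dynamics, convolve it against the ruin probability over $[\tau,T]$ under the state-$k$ homogeneous dynamics, and manipulate the resulting double integral (conditioning on the total claim count and using that each claim is exponential with mean $\mu_i$) until the $\tau$ and $T-\tau$ contributions combine into a single $\lambda^*_i$-expression.

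The principal obstacle lies in step (b). The sample-path law of the piecewise-rate compound Poisson process is \emph{not} equal to that of the homogeneous $\lambda^*_i$-process---the jump-time distributions genuinely differ---so the identification cannot be argued at the level of paths and must exploit the specific $M/M/\cdot$ algebra underlying the Tak\'acs--Prabhu contour formula. The cleanest route, I expect, is via the Laplace transform in $u$ of the survival probability, for which the piecewise intensity can be handled by splitting the governing Kolmogorov forward equation at time $\tau$; one then checks that the resulting transform matches, term by term, the transform associated with the homogeneous model at rate $\lambda^*_i$. Once this identification is in place, reading off $f_1$, $f_2$, $f_3$ and the case distinction from the classical formula completes the proof.
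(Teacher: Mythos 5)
Your plan reproduces the paper's proof in outline. Step (a), recognising the right-hand side as the classical Tak\'acs/Prabhu contour formula for finite-time ruin with exponential claims, is what the paper does by pointing to Asmussen \& Albrecher, Prop.\ 1.3, Ch.\ V; and the crux, step (b), reducing the piecewise-constant intensity to the homogeneous one at the time-averaged rate $\lambda^*_i$, is exactly what the paper accomplishes in one line by asserting that Lemma 8.4 of Asmussen (2003) ``can be extended to a deterministic arrival intensity using $\lambda^*=T^{-1}\int_0^T\lambda(t)\,{\rm d}t$.'' You also omit the paper's two scaling identities that reduce general $\mu_i$, $r_i$ to the case $\mu_i=r_i=1$, but those are cosmetic if you carry the general parameters through the algebra.

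Where you genuinely diverge is in how you propose to establish (b). The paper's route is to transplant the $M/M$-type transition-kernel argument of the cited lemma to a deterministic time-varying rate; your route is to split the Kolmogorov forward equation at $\tau$ --- equivalently, to convolve the state-$j$ survival law on $[0,\tau]$ with the state-$k$ ruin law on $(\tau,T]$, which is precisely the decomposition the paper writes in Eq.\ \eqref{eq_def_switch} and then uses only for the Brownian-scaling approximation, not for Proposition \ref{prop_exp} --- and to verify via the Laplace transform in $u$ that the piecewise transform coincides with the $\lambda^*_i$-homogeneous one. That is a plausible and arguably more self-contained route, but as written it is left at the level of a stated intention (``the cleanest route, I expect,'' ``one then checks''). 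You correctly observe that the sample-path law of the piecewise-rate process is not that of the $\lambda^*_i$-process, so the identification cannot be pathwise and must exploit the exponential-claims algebra; that transform verification is therefore the entire content of step (b), and until it is carried out your proposal has a real gap at exactly the point where the paper leans on the cited lemma.
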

\begin{proof}
The proof follows by mimicking the proof of \citet*[Prop.\ 1.3, Ch.\ V]{MR2766220}, and noting that \citet*[Lemma 8.4]{asmussen2003applied} can be extended to a deterministic arrival intensity using $\lambda^*=T^{-1}\cdot \int_0^T\lambda(t){\rm d}t$. 
Furthermore, the case $\mu_i \neq 1$ can be deduced from the case $\mu_i=1$ via (in self-evident notation)
\[\psi^{j,k}_{i,\lambda_{i},1/\mu_i}(u_i,\tau,T)=\psi^{j,k}_{i,\lambda_{i}\mu_i,1}\left(\frac{u_i}{\mu_i},\frac{\tau}{\mu_i},\frac{T}{\mu_i}\right).\]
Likewise, the case $r_i\neq 1$ follows from \[\psi^{j,k}_{i,\lambda_{i},r_i}(u_i,\tau,T)=\psi^{j,k}_{i,\lambda_{i}/r_i,1}(u_i,r_i \tau,r_i T).\]
This proves the claim.
\end{proof}

\section{Numerical Results}
\label{sec_results}
This section serves as a numerical illustration of the diffusion and single-switch approximations of the finite-time multivariate ruin probability, as presented in Sections \ref{sec_diff} and \ref{sec_single_switch}, respectively. To this end we consider the same setup as used by \citet*{Asmussen1989} for the univariate Markov-modulated risk process. 
We present three representative examples in which we investigate for which parameter settings the approximations perform well.
The examples assess:
\begin{itemize}
\item[$\circ$] the impact of correlation on the multivariate ruin probabilities for the base parameter setting;
\item[$\circ$] the impact of the transition rates of the environmental process; and
\item[$\circ$] the impact of the claim arrival intensities on the performance of both approximations.
\end{itemize}
All computations were performed in R using adaptive quadrature methods for numerical integration. We have performed extensive additional experimentation, but the phenomena observed there do not provide any extra insights relative to those obtained from the three reported examples.

\begin{example}(\textit{Base parameter settings and impact of correlation.})\label{ex_base}
This example illustrates the performance of the diffusion and single-switch approximation of the multivariate ruin probability as given in Equation \eqref{eq_multi_ruin}, as a function of the time horizon $T$. 

We consider two lines of business (i.e., $m=2$) with the same risk profile, i.e.\ the same parameters in the risk process. There are two environmental states (i.e., $I=2$) such that state 1 represents a `booming' state of the economy with a low arrival rate of outgoing claims $\lambda_{1,1}=\lambda_{2,1}=0.45$, whereas state 2 represents a `recession' with a relatively large amount of outgoing claims $\lambda_{1,2}=\lambda_{2,2}=1.8$. Transitions from state~1 to state 2 of $J(t)$ occur at rate $1$, those state 2 to state 1 at rate $2$. The claim size distribution $F_{i,j}$ is exponential with rate $1$ independent of the environmental process. The incoming fees are given by ${\boldsymbol r}=(1,1)$. We consider the case of initial reserves of ${\boldsymbol u}=(10,10)$ over a finite horizon of $T=50$. 

In Figure \ref{fig_Base} the diffusion approximation $\phi({\boldsymbol u},T)$ and single-switch approximation $\chi^j({\boldsymbol u},T)$, of the multivariate ruin probability are given as a function of the time horizon $T$ when starting in environmental state 1 at initiation. Due to the high environmental transition rates $Q$ and the relatively high claim arrival intensities, the diffusion approximation is close to the exact (simulated) multivariate ruin probability. 

To illustrate the effect of the correlation between the two business lines introduced by the environmental process, we also included the multivariate ruin probability assuming independence in the same figure. This probability is determined as the product of the individual ruin probabilities calculated using the univariate diffusion approximation \citep*[Eq. (3.11)]{Asmussen1989}. Due to the high transition probabilities of the environmental process and the high claim arrival intensities, the diffusion approximation outperforms all other approximations. The correlation between the approximative diffusion processes corresponding to the two business lines ($\bar{\Sigma}_{12}/\sqrt{\bar{\Sigma}_{11}\bar{\Sigma}_{22}}$, that is) is relatively low with 13\%; this explains why in this example there is a relatively modest difference between the (bivariate) diffusion approximation and the independent diffusion approximation.

\begin{figure}[ht]
\centering
\includegraphics[width=\textwidth]{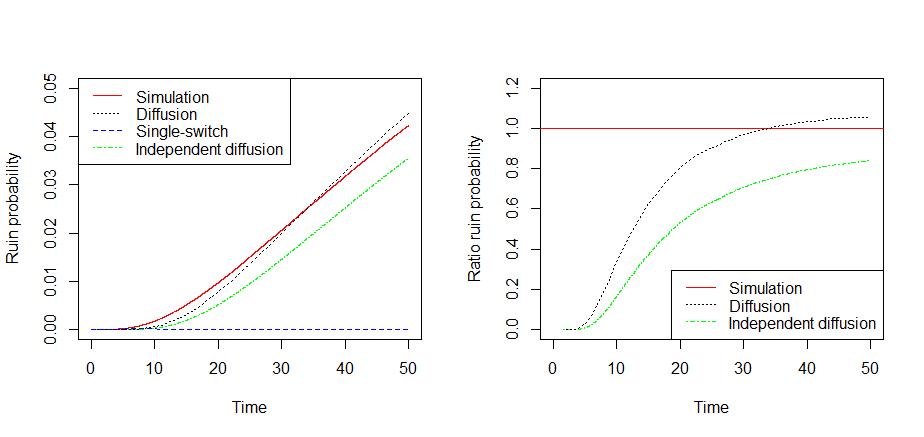}
\caption{Comparison of multivariate ruin probabilities as a function of time for the base parameter set. The right panel shows the value of the diffusion-based approximations relative to the true values.}
\label{fig_Base}
\end{figure}

As the claim size distribution does not depend on the state of the environment, the single-switch approximation is calculated using the results from Section \ref{subsec_special}. The performance of the Brownian motion scaling approach derived in Section \ref{subsec_BMscaling} on the single-switch univariate ruin probability $\psi_i^{j,k}({\boldsymbol u},\tau,T)$ is shown in Figure \ref{fig_BMscaling} as a function of $\tau$. 

\begin{figure}[ht]
\centering
\includegraphics[scale=0.6]{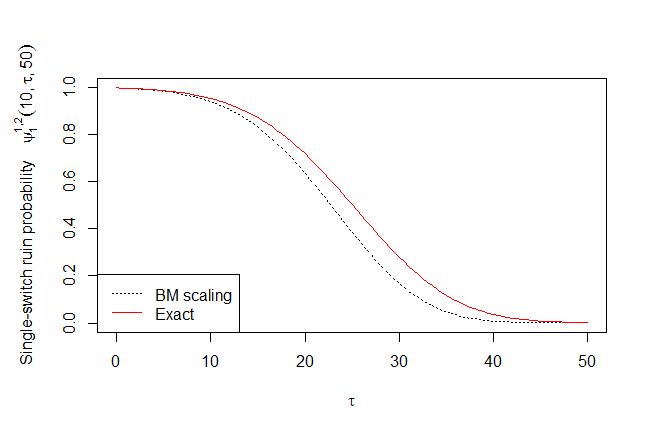}
\caption{Performance of BM scaling approach on the single switch univariate ruin probability $\psi_1^{1,2}(10,\tau,50)$ as a function of time $\tau$.}
\label{fig_BMscaling}
\end{figure}
\end{example}


\begin{example}(\textit{Impact environmental transition rates.})\label{ex_transition}
This example considers the impact of the transition rates of the environmental process on the multivariate ruin probability. In particular it shows that the accuracy of the single-switch approximation increases (with respect to the previous example) when the transition rates $q_{i,j}$ decrease (as expected). The same parameter settings are used as in the previous example, except that now the environmental transition rates are scaled by a factor $\frac{1}{64}$. Figure \ref{fig_LowEnvironment} shows that the decrease in transition rates indeed results in a more accurate single-switch approximation, independently of the remaining parameter values.  
When one decreases the probability of the environmental state making at most one transition, the performance of the single-switch approximation degrades.

\begin{figure}[ht]
\centering
\includegraphics[width=\textwidth]{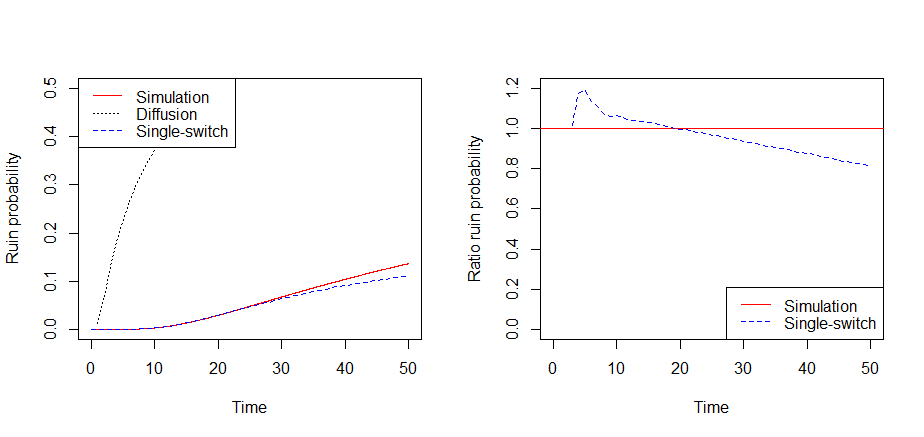}
\caption{Comparison of multivariate ruin probabilities as a function of time for low environmental transition rates $Q$. The right panel shows the value of the single-switch approximation relative to the true value.}
\label{fig_LowEnvironment}
\end{figure}
\end{example}

\begin{example}(\textit{Impact claim arrival intensities.})\label{ex_arrival}
This example studies the impact of the claim arrival intensity $\lambda$ on the multivariate ruin probability. The FCLT derived in Section \ref{subsec_FCLT} implies that the diffusion approximation of the risk process ${\boldsymbol X}$ is accurate when both the transition rates of the environmental process as well as the claim arrival intensities are high; this aligns with numerical findings reported in  \citet*{Asmussen1984}. When decreasing the claim arrival intensities to $\lambda_{1,1}=\lambda_{2,1}=0.36$ and $\lambda_{1,2}=\lambda_{2,2}=1.44$, Figure \ref{fig_LowArrival} shows that the diffusion approximation indeed loses accuracy. The effect on the single-switch approximation is negligible.

In this instance the multivariate ruin probability assuming independence is depicted in the same figure. This probability is determined as the product of the individual ruin probabilities calculated using the univariate diffusion approximation \citep*[Eq. (3.11)]{Asmussen1989}.

\vb

\begin{figure}[ht]
\centering
\includegraphics[scale=0.75]{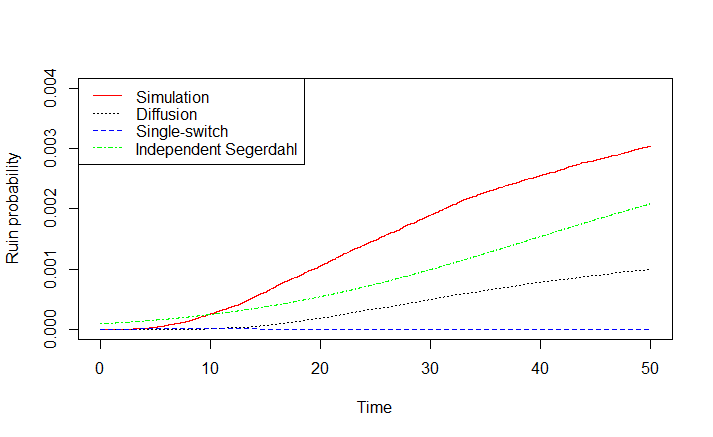}
\caption{Comparison of multivariate ruin probabilities as a function of time for low claim arrival intensities ${\boldsymbol \lambda}$.}
\label{fig_LowArrival}
\end{figure}

\end{example}
To summarize the numerical results obtained in this section,
\begin{itemize}
\item with {\it low} environmental transition rates, we recommend the use of the single-switch approximation;
\item with {\it high} environmental transition rates and {\it high} arrival intensity, we advise to use the diffusion approximation;
\item in case of {\it high} environmental transition rates and {\it low} arrival intensity neither the single-switch nor the diffusion approximation performs well, and the multivariate ruin probability is best approximated assuming independence between the components.

\end{itemize}

\section{Conclusion and outlook}\label{sec_conclusion}
This paper considered a multivariate risk process where the individual risk process of each business line is given by a Cram\'{e}r-Lundberg-type model. To model the dependence between different business lines, a Markov environmental process was introduced affecting each of the individual claims processes. By distinguishing between the transition speed of the environmental process being slow and fast, approximations for the multivariate ruin probability were developed. In case of low transition intensities, the environmental influence and the dependence between the different business lines disappears; this insight leads to the so-called single-switch approximation. For high transition intensities as well as high claim arrival intensities the centered and scaled multivariate risk process approaches a multivariate Brownian motion with drift, thus leading to a diffusion approximation. 
Numerical examples assessing the accuracy of these approximations were presented.

\vb

In case of low claim arrival intensities in a high environmental transition regime, both the single-switch approximation and the diffusion approximation do not perform well. This setting is marked for future research. Similarly, intermediate environmental transition rates require a different approach as the ones presented in this paper.

\bibliography{References}

\newpage
\begin{appendices}
\section{Proof of Theorem~\ref{Th_Pang_multi}}\label{appA}
In this section we follow the same line of reasoning as the proof of \citet*[Theorem 1.1]{Pang}. We begin by decomposing the diffusion-scaled process into three separate processes in Lemma \ref{lem_1}. The convergence of each process separately towards a Brownian motion is proven in Lemma \ref{lem_3}, Lemma \ref{lem_4} and Lemma \ref{lem_5}. These lemmas are the multivariate equivalents of Lemmas 2.6, 2.7 and 2.8 in \citet*{Pang}, respectively.
Finally, we conclude the proof with the joint convergence of the processes at the end of this section.
\begin{lemma}\label{lem_1}
The diffusion-scaled process ${\hat{\boldsymbol Y}}^n$ can be decomposed into the following three processes:
$$\hat{Y}_i^n(t)=\hat{U}_i^n(t)+\hat{V}_i^n(t)+\hat{W}_i^n(t)$$
where $$\hat{U}_i^n(t):=\frac{1}{n^\delta}\sum_{k=1}^{A^n(t)}\left(Z_{i,k}^n(J^n(\tau_{i,k}^n))-m^n_{i,J^n(\tau_{i,k}^n)}\right),$$
$$\hat{V}_i^n(t):=\frac{1}{n^\delta}\left(\sum_{k=1}^{A^n(t)}m^n_{i,J^n(\tau_{i,k}^n)}-\int_0^t m^n_{i,J^n(s)}\lambda^n_{i,J^n(s)}ds\right),$$
$$\hat{W}_i^n(t):=\frac{1}{n^\delta}\left(\int_0^t m^n_{i,J^n(s)}\lambda^n_{i,J^n(s)}ds-\sum_{j=1}^I\lambda_{i,j}^n\mu_{i,j}^n\pi_j t\right).$$
\end{lemma}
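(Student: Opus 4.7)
The plan is to verify the decomposition by a direct telescoping argument, since the claim is a purely algebraic identity (no probabilistic convergence is needed at this stage; that work is deferred to Lemmas~\ref{lem_3}--\ref{lem_5}). First I would write out the sum $\hat{U}_i^n(t)+\hat{V}_i^n(t)+\hat{W}_i^n(t)$ by substituting in the three displayed formulas, and group terms so that the internal pieces cancel pairwise.

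Concretely, summing the three expressions I get
\[
\hat{U}_i^n(t)+\hat{V}_i^n(t)+\hat{W}_i^n(t)
=\frac{1}{n^\delta}\Bigg(\sum_{k=1}^{A_i^n(t)}Z_{i,k}^n(J^n(\tau_{i,k}^n))
-\sum_{j=1}^I\lambda_{i,j}^n\mu_{i,j}^n\pi_j\, t\Bigg),
\]
where the two copies of $\sum_{k=1}^{A_i^n(t)} m^n_{i,J^n(\tau_{i,k}^n)}$ cancel between $\hat U_i^n$ and $\hat V_i^n$, and the two copies of $\int_0^t m^n_{i,J^n(s)}\lambda^n_{i,J^n(s)}\,{\rm d}s$ cancel between $\hat V_i^n$ and $\hat W_i^n$. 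By the definition of $Y_i^n$ given just before Theorem~\ref{Th_Pang_multi}, the remaining claim sum is exactly $Y_i^n(t)$, and the right-hand side equals the definition of $\hat{Y}_i^n(t)$.

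The only bookkeeping point requiring comment is the convention for the centering constant. The scaled process $\hat{Y}_i^n(t)$ is centered using $\mu_{i,j}$ while $\hat{W}_i^n(t)$ uses $\mu_{i,j}^n$ (equivalently, $m^n_{i,j}$); I would note that this discrepancy contributes only a term of order $n^{1-\delta}(\mu_{i,j}^n-\mu_{i,j})$, which, under the scaling assumption $\mu_i^n\to\mu_i$, can be absorbed into an asymptotically negligible error (alternatively, one adopts the convention $m^n_{i,j}=\mu_{i,j}^n$ throughout, so that the identity is exact and any residual term is handled when establishing convergence of $\hat W_i^n$ in Lemma~\ref{lem_5}).

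I do not anticipate a genuine obstacle here: the lemma is essentially a definition of the three auxiliary processes as successive partial sums that make the telescoping transparent, and the main content of the proof is notational verification. The real work lies in the subsequent lemmas, where one must identify $\hat U_i^n$ as a martingale difference-like term carrying the intrinsic claim-size variance, $\hat V_i^n$ as the Poissonian fluctuation around its compensator, and $\hat W_i^n$ as the fluctuation induced by the Markov-modulated intensity (governed by the fundamental matrix $\Upsilon$), which together account for the three summands $\bar\Sigma^1,\bar\Sigma^2,\bar\Sigma^3$ in the covariance of the limiting Brownian motion.
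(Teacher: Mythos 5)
Your telescoping verification is correct, and it is essentially the only content there is to the lemma: the paper itself gives no explicit proof of Lemma~\ref{lem_1} but treats the decomposition as a definitional identity and moves immediately to the convergence lemmas, so your argument matches the paper's (implicit) approach. You are also right to flag the $\mu_{i,j}$ versus $\mu_{i,j}^n$ mismatch between the centering constant in $\hat Y_i^n$ and the one appearing in $\hat W_i^n$ (and, for that matter, the use of $m^n_{i,\cdot}$ in the lemma where the rest of the paper writes $\mu^n_{i,\cdot}$, and $A^n(t)$ where $A_i^n(t)$ is meant); these are typos, and reading $m^n_{i,j}=\mu^n_{i,j}$ throughout makes the identity exact, which is clearly the intended convention given how Lemma~\ref{lem_2} is proved.
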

For each $n\in\mathbb{N}$, define $\mu^n_{i,*}:=\max_{j\in S}\mu_{i,j}^n$, $\lambda^n_{i,*}:=\max_{j\in S}\lambda_{i,j}^n$ and $\sigma^n_{i,*}:=\max_{j\in S}\sigma_{i,j}^n$. By the scaling of the parameters of $Y^n$, we obtain that, for all $i\in\{1,...,m\}$,
\begin{equation}\label{eq_asymp_max}
\frac{1}{n}\lambda^n_{i,*}\rightarrow \lambda_{i,*}, \ \mu^n_{i,*}\rightarrow \mu_{i,*} \ \text{and} \ \sigma^n_{i,*}\rightarrow \sigma_{i,*}, 
\end{equation}
in $\mathbb{R}$ as $n\rightarrow\infty$. Then we can find $n_1>0$ and $\Delta>0$ such that, for any $n>n_1$ and all $i\in\{1,...,m\}$,
\begin{equation}\label{eq_n1}
\max\Big\{\frac{1}{n}\lambda^n_{i,*},\mu^n_{i,*},\sigma^n_{i,*}\Big\}<\Delta.
\end{equation}

We fix the $n_1$ and $\Delta$ throughout the proof. We start by proving the convergence of ${\hat{\boldsymbol U}}^n$. For this we require the next auxiliary result, which is a direct extension of \citet*[Lemma 2.2]{Pang}.
\begin{lemma}\label{lem2.2}
Let $z_{1,1},z_{1,2},...,z_{n,n-1},z_{n,n}$ and $w_{1,1},w_{1,2},...,w_{n,n-1},w_{n,n}$ be complex numbers of modulus $\leq b$. Then 
$$\Bigg|\prod_{i=1}^m\prod_{j=1}^n z_{i,j}-\prod_{i=1}^m\prod_{j=1}^n w_{i,j}\Bigg|\leq b^{m-1}\sum_{i=1}^m\Bigg|\prod_{j=1}^n z_{i,j}-\prod_{j=1}^n w_{i,j}\Bigg|\leq b^{m+n-2}\sum_{i=1}^m\sum_{j=1}^n|z_{i,j}-w_{i,j}|$$
\end{lemma}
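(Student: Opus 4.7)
The plan is to deduce the stated double inequality by iterating the univariate version of Pang's Lemma 2.2 twice, first across the outer index $i$ and then across the inner index $j$. Recall that the univariate version asserts that for complex numbers $u_1,\dots,u_k,v_1,\dots,v_k$ of modulus at most $b$, one has
\[\Bigl|\prod_{\ell=1}^k u_\ell - \prod_{\ell=1}^k v_\ell\Bigr|\le b^{k-1}\sum_{\ell=1}^k |u_\ell-v_\ell|.\]
This will serve both as the outer and the inner ingredient; no new combinatorics is needed.

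For the outer step I would set $Z_i:=\prod_{j=1}^n z_{i,j}$ and $W_i:=\prod_{j=1}^n w_{i,j}$. Since every factor has modulus at most $b$ and we are in the regime $b\le 1$ (the $z_{i,j},w_{i,j}$ enter the application to Theorem~\ref{Th_Pang_multi} as characteristic-function values, hence lie in the closed unit disk), the products themselves satisfy $|Z_i|,|W_i|\le b^n\le b$. Applying the univariate lemma to the $m$-tuples $(Z_1,\dots,Z_m)$ and $(W_1,\dots,W_m)$ then yields
\[\Bigl|\prod_{i=1}^m Z_i-\prod_{i=1}^m W_i\Bigr|\le b^{m-1}\sum_{i=1}^m |Z_i-W_i|,\]
which is exactly the first inequality of the claim.

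For the inner step, for each fixed $i$ I would apply the same univariate lemma to $(z_{i,1},\dots,z_{i,n})$ and $(w_{i,1},\dots,w_{i,n})$, whose entries have modulus at most $b$ by assumption, obtaining $|Z_i-W_i|\le b^{n-1}\sum_{j=1}^n|z_{i,j}-w_{i,j}|$. Substituting this into the outer bound combines the two prefactors to $b^{m-1}\cdot b^{n-1}=b^{m+n-2}$ in front of the double sum, which gives the second inequality.

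The argument is essentially bookkeeping once the univariate version is in hand; the only point that warrants attention is the modulus estimate $|Z_i|,|W_i|\le b$ used in the outer step, which relies on $b\le 1$. If one wished to drop that restriction, the natural bounds obtained by the same two-step telescoping are $b^{n(m-1)}$ and $b^{nm-1}$ in place of $b^{m-1}$ and $b^{m+n-2}$ respectively; I would record this as a remark but it plays no role in the subsequent use of Lemma~\ref{lem2.2} in the proof of the multivariate FCLT.
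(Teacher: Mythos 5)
Your proof is correct and is exactly the two-step iteration that the paper's phrase ``a direct extension of [Pang, Lemma~2.2]'' alludes to; the paper supplies no written proof of Lemma~\ref{lem2.2}, so there is no alternative argument to compare against, and your write-up simply fills that gap. Your flag on the hidden assumption $b\le 1$ is the substantive point and it is well placed: the outer application of the univariate lemma requires $|Z_i|,|W_i|\le b$, which only follows from $|Z_i|,|W_i|\le b^n$ when $b\le 1$, and for $b>1$ the stated prefactor $b^{m-1}$ is simply false. For instance, with $m=n=2$, $b=2$, all $z_{i,j}=2$, and $w_{1,1}=w_{2,1}=w_{2,2}=2$, $w_{1,2}=0$, the left-hand side is $|16-0|=16$ while $b^{m-1}\sum_i|Z_i-W_i|=2\cdot(4+0)=8$. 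In the application the factors are characteristic-function values, so $b=1$ and both prefactors collapse to $1$, which is why the paper never has to confront this; still, either $b\le 1$ should be added to the hypotheses, or the prefactors should be replaced by the $b^{n(m-1)}$ and $b^{nm-1}$ you identify. One cosmetic remark: the enumeration $z_{1,1},\ldots,z_{n,n}$ in the lemma statement should presumably read $z_{1,1},\ldots,z_{m,n}$ to match the double product over $i\in\{1,\ldots,m\}$ and $j\in\{1,\ldots,n\}$; your proof correctly treats the intended $m\times n$ array.
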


\begin{lemma}\label{lem_2}
The finite-dimensional distributions of $\hat{\boldsymbol U}^n=(\hat{U}_1^n,...,\hat{U}_m^n)$ converge to those of $\hat{\boldsymbol U}$, where $
\hat{\boldsymbol U}:=(\hat{U}_1,...,\hat{U_m})$ with $\hat{U}_1:=\{\hat{U}_1(t):t\geq 0\}$ being given by
\begin{equation}\label{eq_lem2}
\hat{\boldsymbol U}:=\begin{cases}\textbf{B}^1, &  \text{if \ } \delta=\tfrac{1}{2}, \ \alpha\geq 1,\\
\bm{0}, & \text{if \ } \delta=1-\tfrac{\alpha}{2}, \ \alpha\in(0,1);\end{cases}
\end{equation}
here $\textbf{B}^1=(B^1_1,...,B^1_m)$ is a $m$-dimensional 
Brownian motion with $\mathbb{E}\left[(\textbf{B}^1(t))(\textbf{B}^1(t))^\top\right]=\bar{\Sigma}^1t$, where $\bar{\Sigma}^1$ has been defined in Section \ref{subsec_FCLT}.
\end{lemma}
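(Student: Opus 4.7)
My proof plan for Lemma~\ref{lem_2}:

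The key idea is to exploit the conditional independence structure: given the entire path of the environmental process $J^n$, the claim sizes $\{Z_{i,k}^n\}$ are independent both across the claim index $k$ and across the component index $i$ (since claims of different components arrive as independent thinnings driven by independent Poisson clocks and the claim-size distributions depend only on the current state of $J^n$). I will therefore first compute the joint conditional characteristic function of a finite-dimensional marginal $(\hat{\boldsymbol U}^n(t_1),\ldots,\hat{\boldsymbol U}^n(t_p))$ given $\mathcal F^n:=\sigma(J^n(s),A^n_i(s):s\ge 0,\,i\le m)$. By independence this factorises into a product
\[
\prod_{i=1}^m\prod_{k=1}^{A_i^n(t_p)}\varphi_{i,J^n(\tau_{i,k}^n)}^n\!\bigl(\textstyle\sum_{l:t_l\geq\tau_{i,k}^n}\theta_{i,l}/n^\delta\bigr),
\]
where $\varphi_{i,j}^n$ is the characteristic function of the centred claim $Z^n_{i,k}-m^n_{i,j}$ under state $j$.

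Next I would do the standard Taylor expansion $\log\varphi_{i,j}^n(u)=-\tfrac12(\sigma_{i,j}^n)^2 u^2+O(|u|^3)$, valid since $u=O(n^{-\delta})\to 0$ and the third absolute moments are bounded uniformly in $n$ thanks to~\eqref{eq_n1}. Summing over $k$ and using the multivariate Lemma~\ref{lem2.2} to control the resulting product against its Gaussian surrogate, the conditional characteristic function becomes, up to a negligible remainder,
\[
\exp\!\Big(-\tfrac1{2n^{2\delta}}\sum_{i=1}^m\sum_{l,l'}\theta_{i,l}\theta_{i,l'}\!\!\sum_{k=1}^{A_i^n(t_l\wedge t_{l'})}(\sigma^n_{i,J^n(\tau_{i,k}^n)})^2\Big).
\]
The diagonal-in-$i$ structure of the exponent immediately produces the block-diagonal $\bar\Sigma^1$ and the $\min(t_l,t_{l'})$ time covariance, explaining the form of the limiting Brownian motion.

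The remaining work is to drive the inner double sum to its deterministic limit. Because the transition rates of $J^n$ are sped up by $n^\alpha$ with $\alpha>0$, standard ergodic-averaging for Markov-modulated point processes (this is already used in~\citet*{Pang} in the univariate case) gives
\[
\frac{1}{n}\sum_{k=1}^{A_i^n(s)}(\sigma^n_{i,J^n(\tau_{i,k}^n)})^2\;\xrightarrow{\mathbb P}\;s\,\bar\sigma_i^2,\qquad s\in\{t_l\wedge t_{l'}\}.
\]
Combining this with the prefactor $n^{-2\delta}$ yields the dichotomy: for $\delta=\tfrac12$ (which requires $\alpha\geq 1$ so that the inter-arrival environmental fluctuations are averaged out on the claim time-scale) the exponent converges to $-\tfrac12\sum \theta_{i,l}\theta_{i,l'}\bar\sigma_i^2(t_l\wedge t_{l'})$, identifying the limit as $\mathbf B^1$; for $\delta=1-\alpha/2$ with $\alpha\in(0,1)$ the scaling $n^{1-2\delta}=n^{\alpha-1}\to 0$ crushes the exponent to zero, giving the $\bm 0$ limit. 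Taking the full (unconditional) expectation via dominated convergence then transfers the convergence of conditional characteristic functions to convergence of ordinary characteristic functions, and L\'evy's continuity theorem delivers convergence of the finite-dimensional distributions.

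The step I expect to be delicate is controlling the remainder from the Taylor expansion uniformly in the (random) number of claims $A_i^n(t_p)=O(n)$: the third-moment error per term is $O(n^{-3\delta})$ and there are $O(n)$ terms, so the total error is $O(n^{1-3\delta})$, which vanishes in both regimes $\delta=\tfrac12$ and $\delta=1-\alpha/2$, $\alpha<1$ — but making this rigorous requires a uniform integrability / truncation argument in $A_i^n$, analogous to~\citet*[Lemma~2.6]{Pang}, and verifying that the multivariate Lemma~\ref{lem2.2} absorbs the cross-component bookkeeping without loss. The rest of the argument is essentially componentwise and inherits directly from Pang's univariate proof.
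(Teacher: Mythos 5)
Your proof takes essentially the same route as the paper's: condition on the environment/arrival $\sigma$-field, factorize the conditional characteristic function into a product over $i$ and $k$, Taylor expand each factor, control the gap between the product and its Gaussian surrogate via the multivariate Lemma~\ref{lem2.2}, drive the random quadratic form to its ergodic (deterministic) limit, and conclude with L\'evy's continuity theorem. One caution on the remainder: you invoke a cubic Taylor bound and uniformly bounded third absolute moments, but the model only assumes finite variances, and \eqref{eq_n1} controls only $\mu^n_{i,*}$, $\sigma^n_{i,*}$ and $\lambda^n_{i,*}/n$ — not third moments. The paper instead uses the second-moment expansion $1-\tfrac{\theta_i^2}{2n^{2\delta}}(\sigma^n_{i,J^n(\tau_{i,k}^n)})^2 + o(n^{-2\delta})$ combined with Pang's Lemma~2.3 and Lemma~\ref{lem2.2}, which suffices under the stated hypotheses; you should switch to that weaker expansion, after which your argument goes through.
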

\begin{proof}
We need to prove 
\begin{equation}\label{eq_onedim}
(\hat{\boldsymbol U}^n(t_1),...,\hat{\boldsymbol U}^n(t_k))\Rightarrow(\hat{\boldsymbol U}(t_1),...,\hat{\boldsymbol U}(t_k)) \ \text{in} \ \mathbb{R}^{m\times k} \ \text{as} \ n\rightarrow\infty,
\end{equation}
for any $0\leq t_1\leq \cdots\leq t_k\leq T$ and $k\geq 1$.
We first consider the case of a single point in time: we aim at proving that, for each $t\geq 0$,
$$\hat{\boldsymbol U}^n(t)\Rightarrow\hat{\boldsymbol U}(t) \in \mathbb{R}^m \ \text{as} \ n\rightarrow\infty.$$
By L\'{e}vy's continuity theorem on $\mathbb{R}^m$ \citep*[see][Thm.\ 4.3]{kallenberg1997foundations}, it is sufficient to show convergence of the characteristic function: we have to prove that, as $n\to\infty$,\[
\psi_t^n(\bm{\theta}):=\mathbb{E}\left[e^{i\bm{\theta}^T\hat{\boldsymbol U}^n(t)}\right]\to \psi_t(\bm{\theta}):=\mathbb{E}\left[e^{i\bm{\theta}^T\hat{\boldsymbol U}(t)}\right]\] for every $\bm{\theta}\in\mathbb{R}^m$.
By the definition of $\hat{\boldsymbol U}$ in \eqref{eq_lem2}, \begin{equation}
\psi_t(\bm{\theta}):=\mathbb{E}\left[e^{i\bm{\theta}^T\hat{\boldsymbol U}(t)}\right]=\begin{cases}\exp\left(-\frac{1}{2}\bm{\theta}^T\bar{\Sigma}^1\bm{\theta} t\right), & \delta=\tfrac{1}{2}, \ \alpha\geq 1\\
1, & \delta=1-\tfrac{\alpha}{2}, \ \alpha\in(0,1).\end{cases}
\end{equation}
Let $\mathcal{A}^n_{t}:=\sigma\{\textbf{A}^n(s):0\leq s\leq t\}\vee\sigma\{J^n(s):0\leq s\leq t\}\vee \mathcal{N}$, where $\mathcal{N}$ is the collection of $P$-null sets. Then, by conditioning, we obtain

\begin{align}
\psi_t^n(\bm{\theta})&=\mathbb{E}\left[\exp\left(i\bm{\theta}^T\hat{\boldsymbol U}^n(t)\right)\right]=\mathbb{E}\left[\mathbb{E}\left[\exp\left(i\bm{\theta}^T\hat{\boldsymbol U}^n(t)\right)\big|\mathcal{A}^n_{t}\right]\right]\nonumber\\
&=\mathbb{E}\left[\prod_{i=1}^{m}\prod_{k=1}^{A_i^n(t)}\mathbb{E}\left[\exp\left(i\theta_i\frac{1}{n^\delta}\left(Z_{i,k}^n(J^n(\tau_{i,k}^n))-\mu^n_{i,J^n(\tau_{i,k}^n)}\right)\right)\big|\mathcal{A}^n_{t}\right]\right]\nonumber\\ 
&=\mathbb{E}\left[\prod_{i=1}^m\prod_{k=1}^{A_i^n(t)}\left(1-\frac{\theta_i^2}{2n^{2\delta}}(\sigma_{i,J^n(\tau_{i,k}^n)}^n)^2+o(n^{-2\delta})\right)\right]\label{eq_Taylor}
\end{align}
By \eqref{eq_asymp_max}, we can find $n_2$ such that for any $n>n_2$ and all $i\in\{1,...,m\}$, 
$$0<\max_{1\leq k\leq A^n_{i}(t)}\left\{\frac{\theta_i^2}{2n^{2\delta}}(\sigma_{i,J^n(\tau_{i,k}^n)}^n)^2-o\left(n^{-2\delta}\right)\right\}<1.$$
Furthermore, recall the definition of $n_1$ in \eqref{eq_n1}. Then, for $\delta=\tfrac{1}{2}$, $\alpha\geq 1$ and for any 
\begin{equation}\label{eq_n3}
n>n_3:=\max\{n_1,n_2\},
\end{equation}
we have
\begin{align}
\Big|\psi_t^n(\bm{\theta})-\psi_t(\bm{\theta})\Big|\leq&\:\:\mathbb{E}\Bigg[\Bigg|\prod_{i=1}^m\prod_{k=1}^{A_i^n(t)}\left(1-\frac{\theta_i^2}{2n}(\sigma_{i,J^n(\tau_{i,k}^n)}^n)^2+o(n^{-1})\right)
 -\prod_{i=1}^m\prod_{k=1}^{A_i^n(t)}\exp\left(-\frac{\theta_i^2}{2n}(\sigma_{i,J^n(\tau_{i,k}^n)}^n)^2\right)\Bigg|\Bigg]\nonumber\\
&+\Bigg|\mathbb{E}\left[\exp\left(-\sum_{i=1}^m\sum_{k=1}^{A_i^n(t)}\frac{\theta_i^2}{2n}(\sigma_{i,J^n(\tau_{i,k}^n)}^n)^2\right)\right]-\exp\left(-\sum_{i=1}^m\frac{\theta_i^2}{2}\bar{\sigma}_{i}^2\right)\Bigg|\nonumber\\
\leq &\:\: \mathbb{E}\left[\sum_{i=1}^m\sum_{k=1}^{A_i^n(t)}\frac{\theta_i^4}{4n}(\sigma_{i,J^n(\tau_{i,k}^n)}^n)^4\right]+o(1)\nonumber\\
&+\Bigg|\mathbb{E}\left[\exp\left(-\sum_{i=1}^m\sum_{k=1}^{A_i^n(t)}\frac{\theta_i^2}{2n}(\sigma_{i,J^n(\tau_{i,k}^n)}^n)^2\right)\right]-\exp\left(-\sum_{i=1}^m\frac{\theta_i^2}{2}\bar{\sigma}_{i}^2\right)\Bigg|\nonumber\\
\rightarrow& \ 0 \ \:\:\text{as} \ n\rightarrow\infty; \label{eq_triangle}
\end{align}
here, the first inequality is due to the triangle inequality and the second inequality follows by Lemma \ref{lem2.2} above in combination with \citet*[Lemma 2.3.]{Pang}. By \eqref{eq_n1}, for large enough $n$ defined above, we have
$$\mathbb{E}\left[\frac{1}{n}\sum_{k=1}^{A_i^n(t)}(\sigma_{i,J^n(\tau_{i,k}^n)}^n)^4\right]\leq\Delta^5 t, \ \forall i, \ t\geq 0.$$
As a result, the first two terms in the last equation converge to 0 when $n\rightarrow 0$. For the convergence of the last term, since the sequence $$\left\{\exp\left(-\sum_{i=1}^m\sum_{k=1}^{A_i^n(t)}\frac{\theta_i^2}{2n}(\sigma_{i,J^n(\tau_{i,k}^n)}^n)^2\right):n\geq 1\right\}$$ is bounded for each $t\geq 0$, it suffices to show that, for all $i\in\{1,...,m\}$,
\begin{equation}\label{eq_convR}
\sum_{k=1}^{A_i^n(t)}(\sigma_{i,J^n(\tau_{i,k}^n)}^n)^2\Rightarrow\bar{\sigma}_i^2 t,  \ \:\ \text{in} \ \mathbb{R} \ \text{as} \ n\rightarrow\infty.
\end{equation}
This follows from the convergences
$$\sum_{j=1}^I \frac{\lambda_{i,j}^n}{n}\int_0^t \mathbbm{1}(J^n(s)=j)\,{\rm d}s\rightarrow\sum_{j=1}^I\lambda_{i,j}\pi_{j}t \ \:{\rm a.s.},$$
and
$$\sum_{j=1}^I \frac{\lambda_{i,j}^n}{n}(\sigma_{i,j}^n)^2\int_0^t \mathbbm{1}(J^n(s)=j)\,{\rm d}s\rightarrow\sum_{j=1}^I\lambda_{i,j}\sigma_{i,j}^2\pi_{j}t \ \:{\rm a.s.}$$
by claim (4) in \citet*{Blom_queue}, the weak law of large numbers for Poisson processes, and the `random change of time lemma' \citep*[see][pp. 151]{billingsley1999convergence}.

For $\delta=1-\tfrac{\alpha}{2}$ and $\alpha\in(0,1)$, we follow the same line of reasoning and prove 
$$\Bigg|\mathbb{E}\left[\exp\left(-\sum_{i=1}^m\sum_{k=1}^{A_i^n(t)}\frac{\theta_i^2}{2n^{2\delta}}(\sigma_{i,J^n(\tau_{i,k}^n)}^n)^2\right)\right]-1\Bigg|\rightarrow 0 \ \: \text{as} \ n\rightarrow\infty.$$
Thereby, we have shown \eqref{eq_onedim}.

\vb

To show the convergence of the finite-dimensional distributions, it is sufficient to prove that for any 

$(\bm{\theta}^1,...,\bm{\theta}^k)\in\mathbb{R}^{m\times k}$ and $0\leq t_1<\cdots<t_l\leq T$,

$$\mathbb{E}\left[\exp\left(i\sum_{k=1}^l(\bm{\theta}^k)^\top\hat{\boldsymbol U}^{n}(t_k)\right)\right]\rightarrow\mathbb{E}\left[\exp\left(i\sum_{k=1}^l(\bm{\theta}^k)^\top\hat{\boldsymbol U}(t_k)\right)\right] \ \text{as} \ n\rightarrow\infty.$$
By the definition of $\hat{\boldsymbol U}$, we have
$$\mathbb{E}\left[\exp\left(i\sum_{k=1}^l(\bm{\theta}^k)^\top\hat{\boldsymbol U}(t_k)\right)\right]=
\begin{cases}{\displaystyle \exp\left(-\frac{1}{2}\sum_{k_1=1}^l\sum_{k_2=1}^l(\bm{\theta}^{k_1})^\top\bar{\Sigma}^1\bm{\theta}^{k_2} (t_{k_1}\wedge t_{k_2})\right)}, & \delta=\tfrac{1}{2}, \ \alpha\geq 1\\
1, & \delta=1-\tfrac{\alpha}{2}, \ \alpha\in(0,1).\end{cases}$$
By conditioning and direct calculation as in \eqref{eq_Taylor}, we have
\begin{align*}\mathbb{E}\bigg[\exp\bigg(i\sum_{k=1}^l(\bm{\theta}^k)^\top&\hat{\boldsymbol U}^{n}(t_k)\bigg)\bigg]=
\mathbb{E}\left[\prod_{j=1}^l\prod_{i=1}^{m}\exp\left(i\frac{1}{n^\delta}\sum_{k=j}^l\theta^k_i\sum_{h=A^n(t_{j-1})+1}^{A^n(t_j)}\left(Z_{i,h}^{n}(J^{n}(\tau_{i,h}^{n}))-\mu^{n}_{i,J^{n}(\tau_{i,h}^{n})}\right)\right)\right]\\
&\rightarrow\begin{cases}\prod_{j=1}^l\prod_{i=1}^{m}\exp\left(-\frac{1}{2}\left(\sum_{k=j}^l \theta^k_i\right)^2\bar{\sigma}_i^2(t_{j}-t_{j-1})\right), & \delta=\tfrac{1}{2}, \ \alpha\geq 1\\
1, & \delta=1-\tfrac{\alpha}{2}, \ \alpha\in(0,1),\end{cases}
\end{align*}
as $n\rightarrow\infty$, and
$$\prod_{j=1}^l\prod_{i=1}^{m}\exp\left(-\frac{1}{2}\left(\sum_{k=j}^l \theta^k_i\right)^2\bar{\sigma}_i^2(t_{j}-t_{j-1})\right)=\exp\left(-\frac{1}{2}\sum_{k_1=1}^l\sum_{k_2=1}^l(\bm{\theta}^{k_1})^\top\bar{\Sigma}^1\bm{\theta}^{k_2} (t_{k_1}\wedge t_{k_2})\right)$$
Applying L\'{e}vy's continuity theorem (on $\mathbb{R}^m$ now), the convergence can be shown in a similar way as in \eqref{eq_Taylor} and \eqref{eq_triangle}. Therefore, we have proven the weak convergence of the finite-dimensional distributions.
\end{proof} 

\begin{lemma}\label{lem_3}
$\hat{\boldsymbol U}^n\Rightarrow \hat{\boldsymbol U}$ in $\mathbb{D}^m$ as $n\rightarrow \infty$, where $\hat{\boldsymbol U}$ is given in \eqref{eq_lem2}.
\end{lemma}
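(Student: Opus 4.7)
The plan is to combine the finite-dimensional convergence already proved in Lemma \ref{lem_2} with a tightness argument in $(\mathbb{D}^m, d^m_B)$, and then appeal to the standard characterization (convergence of finite-dimensional distributions plus tightness implies weak convergence on the path space). Since the paper has already established the finite-dimensional distributional convergence $\hat{\boldsymbol U}^n \Rightarrow \hat{\boldsymbol U}$ in Lemma \ref{lem_2}, the only remaining work is to verify tightness of the sequence $\{\hat{\boldsymbol U}^n\}_{n\geq 1}$ in $\mathbb{D}^m$.

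The first key observation is that, because $d^m_B$ induces the product topology on $\mathbb{D}^m$ (as noted in Section \ref{subsec_FCLT}), a sequence of $\mathbb{D}^m$-valued random elements is tight if and only if each of its $m$ coordinate sequences is tight in $\mathbb{D}$ (in the $J_1$ or equivalently the Billingsley metric). This reduces the multivariate tightness problem to $m$ univariate ones: we only need to show that for each $i \in \{1, \ldots, m\}$, the sequence $\{\hat{U}_i^n\}_{n\geq 1}$ is tight in $(\mathbb{D}, d_B)$.

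For each fixed component index $i$, the process $\hat{U}_i^n$ is exactly of the form considered by \citet*{Pang} in their Lemma 2.6: it is a scaled sum of centered, conditionally independent random variables indexed by the Poisson arrival process $A_i^n$ in the modulated environment $J^n$, and the scaling parameters $\lambda_{i,j}^n$, $\mu_{i,j}^n$, $\sigma_{i,j}^n$, together with the uniform bound \eqref{eq_n1} and the rate-matrix scaling $Q^n = n^\alpha Q$, satisfy precisely the hypotheses of that lemma. Therefore the Pang--Whitt argument applies verbatim and gives $\hat{U}_i^n \Rightarrow \hat{U}_i$ in $\mathbb{D}$; in particular the sequence is tight. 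Doing this for each $i$ yields tightness of $\{\hat{\boldsymbol U}^n\}$ in $(\mathbb{D}^m, d^m_B)$.

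Combining tightness with the finite-dimensional convergence from Lemma \ref{lem_2}, every weakly convergent subsequence has the same limit, namely $\hat{\boldsymbol U}$, so the whole sequence converges: $\hat{\boldsymbol U}^n \Rightarrow \hat{\boldsymbol U}$ in $\mathbb{D}^m$. The main (and essentially only) obstacle is making the product-topology reduction rigorous: one must confirm that the Borel $\sigma$-algebra $\mathcal{B}^m_D$ coincides with the product $\sigma$-algebra (already guaranteed by separability, as noted in Section \ref{subsec_FCLT}) and that relative compactness in $(\mathbb{D}^m, d^m_B)$ is equivalent to componentwise relative compactness in $(\mathbb{D}, d_B)$. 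Once this functional-analytic step is in place, no new probabilistic estimates beyond those of \citet*{Pang} are required.
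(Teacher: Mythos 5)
Your argument follows essentially the same route as the paper: reduce multivariate tightness to marginal tightness of each coordinate (the paper invokes \citet*[Lemma 7.14(i)]{kosorok2008introduction} for exactly this product-topology step), obtain the marginal tightness from Pang's univariate result (the paper cites Pang's Lemma~2.5, the tightness lemma, while you cite their Lemma~2.6, the full univariate convergence, and extract tightness from it — a cosmetic difference), and then combine tightness with the finite-dimensional convergence of Lemma~\ref{lem_2} and the continuity of the limiting Brownian motion to conclude via the standard characterization (the paper uses \citet*[Thm.~13.1]{billingsley1999convergence}). The proposal is correct and matches the paper's proof in structure and in all essential ingredients.
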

\begin{proof}
Marginal tightness of the $\hat{\boldsymbol U}_i^n$ has been proven by \citet*[Lemma 2.5]{Pang} which implies joint tightness for $\hat{\boldsymbol U}$ \citep*[Lemma 7.14(i)]{kosorok2008introduction}. 
Together with the continuity of $\hat{\boldsymbol U}$, Lemma \ref{lem_2}, we apply \citet*[Thm. 13.1]{billingsley1999convergence} to conclude the convergence of $\hat{\boldsymbol U}^n$.
\end{proof}

\begin{lemma}\label{lem_4}
$\hat{\boldsymbol V}^n\Rightarrow \hat{\boldsymbol V}$ in $\mathbb{D}^m$ as $n\rightarrow \infty$, where $\hat{\boldsymbol V}$ is given by
\begin{equation}\label{eq_lem4}
\hat{\boldsymbol V}:=\begin{cases} \textbf{B}^2, &  \text{if \ } \delta=\tfrac{1}{2}, \ \alpha\geq 1,\\
\mathbf{0}, & \text{if \ } \delta=1-\tfrac{\alpha}{2}, \ \alpha\in(0,1);\end{cases}
\end{equation}
here $\textbf{B}^2:=(B_1^2,...,B_m^2)$ is a $m$-dimensional zero-mean Brownian motion with $\mathbb{E}\left[(\textbf{B}^2(t))(\textbf{B}^2(t))^T\right]=\bar{\Sigma}^2t$, where $\bar{\Sigma}^2$ has been defined in Section \ref{subsec_FCLT}.
\end{lemma}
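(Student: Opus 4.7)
The plan is to mimic closely the proof of Lemma~\ref{lem_3}, exploiting the fact that the components $\hat V_i^n$ are, conditionally on the environmental path, independent compensated jump processes. First I would establish convergence of the finite-dimensional distributions by computing characteristic functions (L\'evy's continuity theorem on $\mathbb R^m$) and then upgrade to weak convergence in $(\mathbb D^m,d_B^m)$ via tightness.

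For the finite-dimensional step, fix $t\ge 0$ and $\bm\theta\in\mathbb R^m$, and condition on $\mathcal F^J_t:=\sigma\{J^n(s):0\le s\le t\}$. Given $J^n$, the arrival processes $A_1^n,\dots,A_m^n$ are independent inhomogeneous Poisson processes with rate $\lambda^n_{i,J^n(s)}$, and each $\hat V_i^n$ is a compensated compound Poisson process whose conditional characteristic function is
\begin{equation*}
\mathbb E\!\left[e^{i\theta_i\hat V_i^n(t)}\mid\mathcal F^J_t\right]=\exp\!\left(\int_0^t\lambda^n_{i,J^n(s)}\Big(e^{i\theta_i m^n_{i,J^n(s)}/n^\delta}-1-i\theta_i m^n_{i,J^n(s)}/n^\delta\Big)\,{\rm d}s\right).
\end{equation*}
Independence given $J^n$ lets me multiply across $i$, and Taylor expansion of the inner exponential (uniform bounds are available via \eqref{eq_n1}) reduces the exponent to $-\tfrac{1}{2n^{2\delta}}\sum_{i=1}^m\theta_i^2\int_0^t\lambda^n_{i,J^n(s)}(m^n_{i,J^n(s)})^2\,{\rm d}s$ up to vanishing remainders. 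Now I would invoke the same ergodic fact used in \eqref{eq_convR}, namely claim~(4) in \citet*{Blom_queue} together with the random time-change lemma, to obtain, for $\delta=\tfrac12$ and $\alpha\ge 1$,
\begin{equation*}
\frac{1}{n}\int_0^t\lambda^n_{i,J^n(s)}(m^n_{i,J^n(s)})^2\,{\rm d}s\longrightarrow\sum_{j=1}^I\lambda_{i,j}\mu_{i,j}^2\pi_j\,t=\bar m_i^2\,t\qquad\text{a.s.,}
\end{equation*}
so that the unconditional characteristic function converges to $\exp(-\tfrac12\bm\theta^\top\bar\Sigma^2\bm\theta\,t)$, the characteristic function of $\textbf B^2(t)$ (the off-diagonal entries vanish precisely because of the conditional independence across~$i$). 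For $\delta=1-\tfrac{\alpha}{2}$ with $\alpha\in(0,1)$, the scaling factor becomes $n^{\alpha-1}\to 0$, so the conditional characteristic function tends to $1$ and the limit is~$\mathbf 0$. The extension to joint multi-time characteristic functions $\mathbb E[\exp(i\sum_{l=1}^k(\bm\theta^l)^\top\hat{\boldsymbol V}^n(t_l))]$ is a routine adaptation, splitting the integration range into $[t_{l-1},t_l]$ and using additivity of the exponent, exactly as in the last display of the proof of Lemma~\ref{lem_2}.

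For tightness, marginal tightness of each $\hat V_i^n$ in $\mathbb D$ is exactly \citet*[Lemma~2.7]{Pang}, and joint tightness in $(\mathbb D^m,d_B^m)$ follows from marginal tightness by \citet*[Lemma~7.14(i)]{kosorok2008introduction}. Together with continuity of the candidate limit $\hat{\boldsymbol V}$ and the finite-dimensional convergence just established, \citet*[Thm.~13.1]{billingsley1999convergence} delivers $\hat{\boldsymbol V}^n\Rightarrow\hat{\boldsymbol V}$ in $\mathbb D^m$.

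The main obstacle I expect is the ergodic-averaging step: one needs uniform control, valid for both $\alpha\ge 1$ and $\alpha\in(0,1)$ separately, of the time average $n^{-1}\int_0^t\lambda^n_{i,J^n(s)}(m^n_{i,J^n(s)})^2\,{\rm d}s$ against the fast-modulated process $J^n$, together with bounding the remainder of the Taylor expansion uniformly in $n$ so that $\mathbb E[\,\cdot\mid\mathcal F^J_t]$ can be evaluated and then integrated out. Once this is secured, the argument is essentially the one already carried out in Lemmas~\ref{lem_2}--\ref{lem_3}; the diagonal structure of $\bar\Sigma^2$ is automatic because of the conditional independence of the $\hat V_i^n$ given the common driver $J^n$.
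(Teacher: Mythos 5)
Your proof is correct, but it takes a genuinely different route from the paper. The paper exploits the martingale structure directly: it observes that $\hat{\boldsymbol V}^n$ is an $\mathbb{R}^m$-valued local martingale (centered compound Poisson), checks that the maximal jump size $\mu_{i,*}^n/n^\delta$ vanishes, computes the predictable quadratic covariation $[\hat V_i^n,\hat V_j^n]$ (which is $\tfrac{1}{n^{2\delta}}\sum_{k\le A_i^n(t)}(\mu^n_{i,J^n(\tau_{i,k})})^2$ on the diagonal and identically zero off the diagonal), shows it converges to $\bar\Sigma^2_{i,j}t$ via the same ergodic averaging you use, and then invokes the multidimensional martingale FCLT (Whitt~2007, Thm.~2.1), which subsumes both finite-dimensional convergence and tightness in a single step. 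You instead mimic the characteristic-function route of Lemmas~\ref{lem_2}--\ref{lem_3}: condition on the environmental path, write the conditional characteristic function of the compensated inhomogeneous compound Poisson process $\hat V_i^n$ in exponential form via the L\'evy--Khintchine formula, Taylor-expand, apply the ergodic averaging of Blom et al.\ to identify the quadratic form, and then separately establish tightness (marginally via the univariate result and jointly via Kosorok's Lemma~7.14(i)) before applying Billingsley's Thm.~13.1. Both arguments are sound and rely on the same ergodic limit; the martingale FCLT packaging is more compact because the convergence of the quadratic covariation together with asymptotically negligible jumps delivers tightness for free, whereas your approach handles fdd convergence and tightness as two explicit steps. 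The diagonal structure of $\bar\Sigma^2$ is secured in both proofs by conditional independence of the components given $J^n$ (and, in the paper's phrasing, by the fact that $A_i^n$ and $A_j^n$, $i\ne j$, have a.s.\ no common jump times, so the cross quadratic covariation vanishes identically).
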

\begin{proof}
As centered Poisson processes are $\mathbb{R}$-valued martingales, i.e., for each $n\in\mathbb{N}$ and every $i\in\{1,...,m\}$ the process $$\left\{A_i^n(t)-\int_0^t\lambda_{i,J^n(u)}^n \,{\rm d}u:t\geq 0\right\}$$ is a martingale, $\hat{\boldsymbol V}^n$ is a $\mathbb{R}^m$-valued martingale. The maximum jump for $\hat{V}_i^n$ is $\mu_{i,*}^n/n^\delta$. By \eqref{eq_asymp_max}, we obtain that the expected value of the maximum jump is asymptotically negligible, i.e., for all $i\in\{1,...,m\}$
$$\frac{1}{n^\delta}\mathbb{E}\left[\mu_{i,*}^n\right]\rightarrow 0, \ \: \text{as} \ n\rightarrow\infty.$$
For $n\in\mathbb{N}$, let $\{[\hat{V}_i^n,\hat{V}_j^n](t):t\geq 0\}$ be the quadratic covariation process of $\hat{V}_i^n$ and $\hat{V}_i^n$. Then, for each $t$, we have by the quadratic variation of a compound Poisson process, as $n\to\infty$,
\begin{align}
[\hat{V}_i^n,\hat{V}_j^n](t)&=\frac{1}{n^{2\delta}}\begin{cases}
\sum_{k=1}^{A_i^n}(\mu^n_{i,J^n(\tau_{i,k})})^2 & \text{for} \ i=j,\\
0, & \text{for} \ i\neq j,
\end{cases}\\&\Rightarrow\begin{cases}\bar{\Sigma}_{i,j}^2 t, &  \text{if \ } \delta=\tfrac{1}{2}, \ \alpha\geq 1,\\
0, & \text{if \ } \delta=1-\tfrac{\alpha}{2}, \ \alpha\in(0,1),\end{cases}  \ 
\end{align}
in $ \mathbb{R}^m$,
where the convergence is proven in the same way as \eqref{eq_convR}. Applying \citet*[Thm. 2.1]{whitt2007}, we have shown the convergence of $\hat{\boldsymbol V}^n$.
\end{proof}

\begin{lemma}\label{lem_5}
$\hat{\boldsymbol W}^n\Rightarrow \hat{\boldsymbol W}$ in $\mathbb{D}^m$ as $n\rightarrow \infty$, where the limit process $\hat{\boldsymbol W}:=\{\hat{\boldsymbol W}(t):t\geq 0\}$ is given by
\begin{equation}\label{eq_lem5}
\hat{\boldsymbol W}:=\begin{cases}\mathbf{0}, &  \text{if \ } \delta=\tfrac{1}{2}, \ \alpha> 1,\\
\textbf{B}^3, & \text{if \ } \delta=1-\tfrac{\alpha}{2}, \ \alpha\in(0,1];\end{cases}
\end{equation}
here $\textbf{B}^3=(B_1^3,...,B_m^3)$ is a $m$-dimensional Brownian motion with $\mathbb{E}\left[(\textbf{B}^3(t))(\textbf{B}^3(t))^T\right]=\bar{\Sigma}^3t$, where $\bar{\Sigma}^3$ has been defined in Section \ref{subsec_FCLT}.
\end{lemma}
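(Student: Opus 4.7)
The plan is to follow the strategy of Pang's univariate argument (Lemma 2.8) and reduce the multivariate claim to it via the Cram\'er--Wold device. I first rewrite
$$\hat W_i^n(t)=\frac{1}{n^\delta}\int_0^t g_i^n(J^n(s))\,{\rm d}s, \qquad g_i^n(k):=\lambda_{i,k}^n\mu_{i,k}^n-\sum_{l\in S}\lambda_{i,l}^n\mu_{i,l}^n\pi_l,$$
so that $\hat W_i^n$ is a centered additive functional of the sped-up Markov chain $J^n$ with generator $Q^n=n^\alpha Q$. For any $\bm{\theta}\in\mathbb{R}^m$, the linear combination $\bm{\theta}^\top\hat{\boldsymbol W}^n$ is itself a univariate centered additive functional of $J^n$ with test function $\tilde g^n:=\sum_i\theta_ig_i^n$. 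Pang's Lemma 2.8 applied to $\bm{\theta}^\top\hat{\boldsymbol W}^n$ therefore delivers a univariate weak limit --- identically $0$ in the regime $\delta=\tfrac{1}{2}$, $\alpha>1$, and a Brownian motion with an explicitly computable variance $\sigma^2(\bm{\theta})\,t$ in the regime $\delta=1-\tfrac{\alpha}{2}$, $\alpha\in(0,1]$.

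The key step will be identifying $\sigma^2(\bm{\theta})=\bm{\theta}^\top\bar{\Sigma}^3\bm{\theta}$. Via the Poisson equation $-Qh=\tilde g$ (solved by $h=\Upsilon\tilde g$) and the associated Dynkin martingale decomposition of $\int_0^t\tilde g^n(J^n(s))\,{\rm d}s$, Pang's asymptotic variance formula reads $\sigma^2(\bm{\theta})=2\langle \tilde f,\Upsilon\tilde f\rangle_\pi$, where $\tilde f(k):=\sum_i\theta_i\lambda_{i,k}\mu_{i,k}$; the constant centering $\sum_i\theta_i\bar\lambda_i$ drops out because $\Upsilon\mathbf{1}=\mathbf{0}$. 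Expanding the bilinear form yields
$$\sigma^2(\bm{\theta})=2\sum_{k,l\in S}\Big(\sum_i\theta_i\lambda_{i,k}\mu_{i,k}\Big)\pi_k\Upsilon_{k,l}\Big(\sum_j\theta_j\lambda_{j,l}\mu_{j,l}\Big)=\bm{\theta}^\top\bar{\Sigma}^3\bm{\theta},$$
as desired. By Cram\'er--Wold this determines the full finite-dimensional distributions of $\hat{\boldsymbol W}^n$, which converge jointly to those of the Brownian motion $\textbf{B}^3$ (respectively to $\bm{0}$); the joint Gaussianity of the limit is automatic from the Gaussianity of each linear combination. The same argument at several time points (as in the proof of Lemma \ref{lem_2}) settles multi-time finite-dimensional convergence.

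Finally, marginal tightness of each $\hat W_i^n$ in $(\mathbb{D},J_1)$ is provided by Pang's Lemma 2.8, and by Kosorok [Lemma 7.14(i)] this lifts to joint tightness of $\hat{\boldsymbol W}^n$ in $(\mathbb{D}^m,d^m_B)$; combining joint tightness with finite-dimensional convergence and continuity of $\hat{\boldsymbol W}$, Billingsley [Thm 13.1] yields $\hat{\boldsymbol W}^n\Rightarrow\hat{\boldsymbol W}$ in $\mathbb{D}^m$. I expect the main obstacle to be the bilinear expansion in the middle step: reconciling the Kipnis--Varadhan-style asymptotic variance of the univariate limit with the paper's formula for $\bar\Sigma^3$, which requires the structural identities $\Upsilon\mathbf{1}=\mathbf{0}$ and $\pi^\top\Upsilon=\mathbf{0}^\top$ together with the parameter limits $\lambda_{i,k}^n/n\to\lambda_{i,k}$ and $\mu_{i,k}^n\to\mu_{i,k}$ to absorb centering and lower-order contributions cleanly.
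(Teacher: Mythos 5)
Your proof is correct, and it reaches the conclusion by a genuinely different route than the paper. The paper's own proof is a one-liner: it rewrites $\hat{W}_i^n$ in occupation-time form,
$$\hat{W}_i^n(t)=\frac{1}{n^\delta}\Big(\sum_{k=1}^I\mu_{i,k}^n\lambda_{i,k}^n\int_0^t \mathbbm{1}(J^n(s)=k)\,{\rm d}s-\sum_{k=1}^I\mu_{i,k}^n\lambda_{i,k}^n\pi_k t \Big),$$
and then invokes an off-the-shelf \emph{multivariate} FCLT for such occupation-time functionals of a sped-up Markov chain, namely Proposition~3.2 of Blom et al.\ (the reference \texttt{Blom\_queue}), which directly delivers the limit $\mathbf{0}$ or $\mathbf{B}^3$ with covariance $\bar\Sigma^3$. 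You, by contrast, take the same occupation-time reformulation but rebuild the multivariate statement from Pang's \emph{univariate} Lemma~2.8: Cram\'er--Wold reduces finite-dimensional convergence to the univariate additive functional $\bm\theta^\top\hat{\boldsymbol W}^n$, whose asymptotic variance you identify as $2\langle\tilde f,\Upsilon\tilde f\rangle_\pi=\bm\theta^\top\bar\Sigma^3\bm\theta$ via the Poisson-equation/Dynkin decomposition and the structural identities $\Upsilon\mathbf 1=\mathbf 0$, $\pi^\top\Upsilon=\mathbf 0^\top$ (both of which hold for $\Upsilon=(\Pi-Q)^{-1}-\Pi$, and your bilinear expansion does match the definition of $\bar\beta_{i,j}$); tightness is then lifted from marginals to the product space by Kosorok's Lemma~7.14(i), and Billingsley's Theorem~13.1 closes the argument. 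What your route buys is self-containedness and an explicit derivation of where $\bar\Sigma^3$ comes from, at the cost of extra bookkeeping (multi-time Cram\'er--Wold, the variance identification, and a separate tightness step). What the paper's route buys is brevity, by delegating precisely the multivariate FCLT you reconstruct to a citation. One minor caveat worth making explicit in your write-up: applying Pang's Lemma~2.8 to $\bm\theta^\top\hat{\boldsymbol W}^n$ requires the test function $\tilde g^n=\sum_i\theta_i g_i^n$ to be an arbitrary (possibly sign-changing) centered function on $S$, not necessarily of the form $\lambda\mu$ arising from a risk process; this is fine because the underlying FCLT for additive functionals of an ergodic Markov chain needs only boundedness and $\pi$-centering, but it should be flagged rather than applied tacitly.
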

\begin{proof}
Let $\bar{{\boldsymbol W}}^n:=(\bar{W}_1^n,...,\bar{W}_m^n)$ with, for $i=1,\ldots,m,$
$$\bar{W}_i^n:=\frac{1}{n^\delta}\left(\sum_{k=1}^I\mu_{i,k}^n\lambda_{i,k}^n\int_0^t \mathbbm{1}(J^n(s)=k)\,{\rm d}s-\sum_{k=1}^I\mu_{i,k}^n\lambda_{i,k}^n\pi_k t \right).$$
By \citet*[Prop. 3.2]{Blom_queue}, we have, as $n\rightarrow\infty$,
$$\bar{{\boldsymbol W}}^n\Rightarrow\begin{cases}\mathbf{0}, &  \text{if \ } \delta=\tfrac{1}{2}, \ \alpha> 1,\\
{\boldsymbol B}^3, & \text{if \ } \delta=1-\tfrac{\alpha}{2}, \ \alpha\in(0,1],\end{cases}$$
with ${\boldsymbol B}^3$ as defined above. This concludes the proof.
\end{proof}

\begin{proof}[Proof of Theorem~\ref{Th_Pang_multi}]
By Lemmas \ref{lem_1}, \ref{lem_3}, \ref{lem_4} and \ref{lem_5}, we have obtained the marginal convergence of $\hat{\boldsymbol U}^n\Rightarrow \hat{\boldsymbol U}$, $\hat{\boldsymbol V}^n\Rightarrow \hat{\boldsymbol V}$ and $\hat{\boldsymbol W}^n\Rightarrow \hat{\boldsymbol W}$. We now have to prove the joint convergence $$\left(\hat{\boldsymbol U}^n,\hat{\boldsymbol V}^n,\hat{\boldsymbol W}^n\right)\Rightarrow \left(\hat{\boldsymbol U},\hat{\boldsymbol V},\hat{\boldsymbol W}\right)$$ where $\hat{\boldsymbol U}$, $\hat{\boldsymbol V}$ and $\hat{\boldsymbol W}$ are mutually independent.
To this end, we first note that $\hat{\boldsymbol U}^n$ and $\hat{\boldsymbol V}^n$ are compensated compound Poisson processes which are martingales. Furthermore, by \citet*[Lem. 3.1]{Blom_queue} the process $\hat{\boldsymbol W}^n$ is a martingale.

By \citet*[Thm.\ 3.12, Ch.\ VIII]{jacod2002limit} (or a slightly less extensive version is given in \citet*[Cor. 2.17, pp. 264]{aldous1985ecole}), it suffices to show that, for ${\boldsymbol M}^n:=(\hat{\boldsymbol U}^n,\hat{\boldsymbol V}^n,\hat{\boldsymbol W}^n)$ and ${\boldsymbol M}:=(\hat{\boldsymbol U},\hat{\boldsymbol V},\hat{\boldsymbol W})$,
\begin{equation}\label{eq_convmart}
\left[{\boldsymbol M}^n,{\boldsymbol M}^n\right](t)\Rightarrow \begin{bmatrix}
    \hat{\Sigma}^1 & \textbf{0} & \textbf{0} \\
    \textbf{0} & \hat{\Sigma}^2 & \textbf{0} \\
    \textbf{0} & \textbf{0} & \hat{\Sigma}^3
  \end{bmatrix} t,
\end{equation}
where
$$(\hat{\Sigma}^1,\hat{\Sigma}^2)=\begin{cases}(\bar{\Sigma}^1,\bar{\Sigma}^2), &  \text{if \ } \delta=\tfrac{1}{2}, \ \alpha\geq 1,\\
(\mathbf{0},\mathbf{0}), & \text{if \ } \delta=1-\tfrac{\alpha}{2}, \ \alpha\in(0,1),\end{cases}\:\:\: \hat{\Sigma}^3=\begin{cases}\mathbf{0}, &  \text{if \ } \delta=\tfrac{1}{2}, \ \alpha> 1,\\
\bar{\Sigma}^3, & \text{if \ } \delta=1-\tfrac{\alpha}{2}, \ \alpha\in(0,1].\end{cases}$$

For $\alpha\in(0,\infty)$ and $\delta\in[\tfrac{1}{2},1)$,  
\begin{align*}
\left[\hat{U}_i^n,\hat{V}_j^n\right](t)&=\frac{1}{n^{2\delta}}\begin{cases}
\sum_{k=1}^{A_i^n}\left(Z_{i,k}^n(J^n(\tau_{i,k}^n))-\mu^n_{i,J^n(\tau_{i,k}^n)}\right)\mu^n_{i,J^n(\tau_{i,k})} & \text{for} \ i=j,\\0, & \text{for} \ i\neq j,\end{cases}
\\ &\Rightarrow 0 \ \text{in} \ \mathbb{R}^m, \ \text{as} \ n\rightarrow\infty.
\end{align*}
Together with $\big[\hat{U}_i^n,\hat{W}_j^n\big](t)=0$ and $\big[\hat{V}_i^n,\hat{W}_j^n\big](t)=0$ this proves \eqref{eq_convmart}.
The proof of Theorem \ref{Th_Pang_multi} is completed by applying the continuous mapping theorem.
\end{proof}

\end{appendices}

\end{document}